\newtheorem{thm}{Theorem}
\newtheorem{prop}[thm]{Proposition}
\newtheorem{cor}[thm]{Corollary}
\newtheorem{lem}[thm]{Lemma}
\newtheorem{exa}[thm]{Example}
\newsavebox{\qedB}
\sbox{\qedB}{\setlength{\unitlength}{1mm}
 \begin{picture}(4,4)(0,0)
  \thinlines
  {\put(0,0){\framebox(2.83,2.83){}}}%
  {\put(1.17,1.17){\framebox(2.83,2.83){}}}%
  {\put(0,0){\framebox(4,4){}}}%
  {\put(1.17,1.17){{\rule{1ex}{1ex} }}}%
 \end{picture}}
\newcommand{\QEDB}{\ifmmode\def\next{\tag"\usebox{\qedB}"}%
 \else\let\next=\relax
 {\unskip\nobreak\hfil\penalty50
 \hskip2em\hbox{}\nobreak\hfil\usebox{\qedB}
 \parfillskip=0pt \finalhyphendemerits=0\penalty-100\bigskip}\fi\next}
\newcommand{\Alphabet}{\hbox{\rm Alph}}
\newcommand{\fac}{\hbox{\rm Fac}}
\newcommand{\prim}{\hbox{\rm Prim}}
\newcommand{\Sc}{\hbox{\rm sc}}
\newcommand{\bprop}{\begin{prop}}
\newcommand{\eprop}{\end{prop}}
\newcommand{\bcor}{\begin{cor}}
\newcommand{\ecor}{\end{cor}}
\newcommand{\blem}{\begin{lem}}
\newcommand{\elem}{\end{lem}}
\title{An upper bound of the number of distinct powers in binary words}
\author{ Shuo Li}
\institute{Laboratoire de Combinatoire et d'Informatique Mathématique,\\
Université du  Québec \`a Montréal,\\
CP 8888 Succ. Centre-ville, Montréal (QC) Canada H3C 3P8\\
\email{li.shuo@uqam.ca} 
}
\begin{document}

\maketitle


\begin{abstract}

A \emph{power} is a word of the form $\underbrace{uu...u}_{k \; \text{times}}$, where $u$ is a word and $k$ is a positive integer and a \emph{square} is a word of the form $uu$. Fraenkel and Simpson conjectured in 1998 that the number of distinct squares in a word is bounded by the length of the word. This conjecture was proven recently by Brlek and Li. Besides, there exists a stronger upper bound for binary words conjectured by Jonoska, Manea and Seki stating that for a word of length $n$ over the alphabet $\left\{a, b\right\}$, if we let $k$ be the least of the number of a's and
the number of b's and $k \geq 2$, then the number of distinct squares is
upper bounded by $\frac{2k-1}{2k+2}n$. In this article, we prove this conjecture by giving a stronger statement on the number of distinct powers in a binary word.
  
\end{abstract}

\section{Introduction}
\label{sec:intro}
A \emph{power} is a word of the form $\underbrace{uu...u}_{k \; \text{times}}$, where $u$ is a word and $k$ is a positive integer; the power is also called a {\em $k$-power} and $k$ is its {\em exponent}. The upper bound of the number of distinct $k$-powers in a finite word was studied in~\cite{KUBICA,li2022,li202202} and the best known result is as follows:
\begin{thm}[Theorem 1 and Theorem 2 in~\cite{li202202} ]
\label{th:sw}
 For every finite word $w$, let $m(w)$ denote the number of distinct nonempty powers of exponent at least 2 in $w$, let $m_k(w)$ denote the number of distinct nonempty $k$-powers in $w$, let $|w|$ denote the length of $w$ and let $|\Alphabet(w)|$ denote the number of distinct letters in $w$, then one has $$m(w) \leq |w| - |\Alphabet(w)|;$$ Moreover, for any integer $k \geq 2$, $$m_k(w) \leq \frac{|w| - |\Alphabet(w)|}{k-1}.$$
\end{thm}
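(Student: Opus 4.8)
Both inequalities are really statements about the $|w|-|\Alphabet(w)|$ positions of $w$ that do not carry the first occurrence of their letter; call these the \emph{repeat positions} and write $R(w)$ for their set, so $|R(w)|=|w|-|\Alphabet(w)|$. The bound $m(w)\le|w|-|\Alphabet(w)|$ amounts to constructing an injection from the distinct nonempty powers of exponent $\ge 2$ into $R(w)$, while $m_k(w)\le\frac{|w|-|\Alphabet(w)|}{k-1}$ amounts to assigning, for a fixed $k$, to each distinct $k$-power a $(k-1)$-subset of $R(w)$ so that distinct $k$-powers receive disjoint subsets. These are genuinely two different bookkeeping tasks: the word $a^n$, whose powers are $a^2,\dots,a^n$, already shows that one cannot simultaneously give every $k$-power $k-1$ positions with the sets disjoint across all exponents at once. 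So I would attack the two bounds with related but distinct charging schemes and let each follow by a final count of $|R(w)|$.

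The common vehicle is the \emph{rightmost occurrence}. Write the rightmost occurrence of a distinct power $p=u^k$ ($u$ primitive) as $w[i..j]$, so $j-i+1=k|u|$. Its $k-1$ internal block boundaries $i+|u|,i+2|u|,\dots,i+(k-1)|u|$ all carry the letter $w[i]$, which already occurs at position $i$; hence they all lie in $R(w)$. For the $k$-power bound this is exactly the right number of legal positions; for $m(w)$ one keeps a single, well-chosen such position per power. Everything then reduces to choosing these positions so that across distinct powers the chosen sets are pairwise disjoint.

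That disjointness is the entire difficulty, and the obvious choices provably fail. Using \emph{all} internal boundaries for the $k$-power bound is wrong: in $w=(ab)^{10}$ the $3$-powers $(ab)^3$ and $(abab)^3=(ab)^6$ have rightmost occurrences both ending at position $20$, and their boundary sets overlap (swapping block starts for block ends fails symmetrically). For $m(w)$, even sending $p=u^k$ to $i+|u|$ is not injective: in $w=abaababaab$ the squares $(ab)^2$ and $(abaab)^2$ both go to position $6$. And a straight induction on $|w|$ is hopeless, since appending one repeated letter can create two new distinct powers at once, e.g.\ going from $baababaab$ to $baababaaba$ creates both $(aba)^2$ and $(baaba)^2$. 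So the chosen positions have to be adapted to the way rightmost occurrences of competing powers interleave. I expect a correct argument to split into the case of powers sharing a primitive root (a chain $u^2,u^3,\dots$, where a simple uniform shift of the chosen boundary separates them, as in $a^n$) and the case of incomparable roots, which is the hard one: if two such powers were forced to share a chosen position, their rightmost occurrences would overlap around that position, and the Fine--Wilf periodicity theorem — together with the two- and three-squares lemmas of Crochemore--Rytter and Fraenkel--Simpson, which bound how many squares can begin at a common position and force their lengths to grow geometrically, so that only boundedly many powers interact near any position — should let one either re-read one of the two powers strictly further to the right, contradicting maximality of its rightmost occurrence, or collapse the configuration outright. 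Organizing the interactions around maximal repetitions (runs) is probably the cleanest way to keep them finite. Making this case analysis exhaustive and extracting from it a conflict-free choice rule is the one genuinely hard step; the two inequalities then drop out of counting.
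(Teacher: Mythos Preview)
This theorem is not proved in the present paper; it is quoted from~\cite{li202202}. But the paper does expose the method of~\cite{li202202} through the lemmas it imports, and that method is nothing like your position-charging scheme. The argument runs through Rauzy graphs: for each $i$, the $i$-th Rauzy graph $\Gamma_w(i)$ has vertex set $L_w(i)$ and edge set $L_w(i+1)$, hence cyclomatic number $C_w(i+1)-C_w(i)+1$. A \emph{small circuit} in $\Gamma_w(i)$ is an elementary circuit on at most $i$ vertices; Lemma~\ref{lem:inde} says all small circuits in a given $\Gamma_w(i)$ are independent, so their number is at most the cyclomatic number. Lemma~\ref{lem:inj} provides an injection from $M(w)$ into the set of all small circuits of $\Gamma_w=\bigcup_i\Gamma_w(i)$. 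Summing the cyclomatic numbers telescopes to $|w|-|\Alphabet(w)|$, and the first inequality follows. The $k$-power bound in~\cite{li202202} refines this by noting that a single $k$-power $u^k$ contributes a small circuit in each of the $k-1$ Rauzy graphs $\Gamma_w(|u|),\Gamma_w(2|u|),\dots,\Gamma_w((k-1)|u|)$, and distinct $k$-powers yield distinct circuits; so $(k-1)m_k(w)$ is again bounded by the telescoped sum. There is no Fine--Wilf, no three-squares lemma, no case analysis on overlapping rightmost occurrences.

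Your proposal is a plan, not a proof: you correctly identify that naive charging to internal block boundaries of rightmost occurrences collides, and you gesture at Fine--Wilf and the three-squares machinery as the intended repair, but you do not actually produce the conflict-free choice rule. That is the entire content of the theorem, and you explicitly leave it as ``the one genuinely hard step''. By contrast, the Rauzy-graph route sidesteps all of the overlap combinatorics you are worried about: independence of small circuits is a short linear-algebra argument, and the injection from powers to small circuits is essentially that $u^k\in\fac(w)$ forces a closed walk of length $|u|$ through the vertex $u$ in $\Gamma_w(|u|)$. If you want to push your position-based approach through, you would need to exhibit the injection explicitly; the tools you name are relevant, but the endgame you sketch (``re-read one of the two powers strictly further to the right, contradicting maximality'') is exactly the kind of argument that historically got stuck at $2|w|$ and then $\tfrac{11}{6}|w|$ for squares, and it is not clear it reaches $|w|-|\Alphabet(w)|$ without the graph-theoretic reformulation.
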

Particularly, a square is a $2$-power and upper bound of the number of distinct squares in a finite word was studied in ~\cite{FraenkelS98,Ilie,lam,DezaFT15,thie,Brlekli}. A conjecture of Fraenkel and Simpson~\cite{FraenkelS98} states that the number of distinct squares in a word is bounded by the length of the word. This conjecture is confirmed by the previous theorem.
Besides, there exists a stronger upper bound for binary words conjectured by Jonoska, Manea and Seki~\cite{Jono}:

\begin{thm}
\label{th:conj}
Let $w$ be a finite word over the alphabet $\left\{a, b\right\}$, let $s(w)$ denote the number of distinct squares in $w$ and let $k$ denote the least of the number of a's and
the number of b's in $w$. If $k \geq 2$, then one has
 $$s(w) \leq \frac{2k-1}{2k+2}|w|.
 $$
\end{thm}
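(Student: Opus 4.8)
The plan is to deduce Theorem~\ref{th:conj} from a binary‑specific sharpening of Theorem~\ref{th:sw}, combined with a structural analysis of squares via the block decomposition of $w$. I would assume without loss of generality that $a$ is the less frequent letter, so $w$ contains exactly $k$ occurrences of $a$ and $n-k$ occurrences of $b$, where $n=|w|$ and $k\le n/2$, and I write $w=b^{i_0}ab^{i_1}a\cdots ab^{i_k}$ with $i_0+\cdots+i_k=n-k$. I split the distinct squares of $w$ into the \emph{$b$-squares} $b^{2},b^{4},\ldots,b^{2\lfloor M/2\rfloor}$, where $M=\max_{0\le j\le k}i_j$ — there are exactly $\lfloor M/2\rfloor$ of them, and $M\ge (n-k)/(k+1)$ by averaging — and the \emph{$a$-squares}, those containing an occurrence of $a$, necessarily an even number $2\ell\ge 2$ of them; thus $s(w)=\lfloor M/2\rfloor+(\text{number of distinct }a\text{-squares})$.

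First I would record the structural description of $a$-squares. If $uu$ is an $a$-square with $2\ell$ letters $a$, then $u=b^{s_0}ab^{s_1}a\cdots ab^{s_\ell}$, and any occurrence of $uu$ in $w$ must align its letters $a$ with $2\ell$ consecutive letters $a$ of $w$, say those of indices $j+1,\ldots,j+2\ell$; matching the intervening $b$-blocks forces $s_m=i_{j+m}=i_{j+\ell+m}$ for $1\le m\le \ell-1$ (a ``collision'' condition on the block sequence, vacuous only when $\ell=1$), the identity $s_0+s_\ell=i_{j+\ell}$, and the inequalities $s_0\le i_j$ and $s_\ell\le i_{j+2\ell}$. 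Hence the set of $a$-squares is governed by the integer sequence $(i_0,\ldots,i_k)$ alone, and for each admissible pair $(j,\ell)$ the number of $a$-squares it produces is at most $\min(i_j,i_{j+\ell},i_{j+2\ell})+1$, subject moreover to the severe restriction imposed by the collision condition.

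The core of the argument is to turn this into the inequality $s(w)\le\frac{2k-1}{2k+2}n$. I expect the clean route is the one advertised in the abstract: prove a single inequality valid for \emph{all} distinct powers of $w$ that is stronger than Theorem~\ref{th:sw} in the binary case by crediting the contribution of long runs of the majority letter — intuitively, a run $b^{M}$ forces into $w$ the non-square powers $b^3,b^5,\ldots$, which consume part of the budget $m(w)\le|w|-|\Alphabet(w)|$ without being squares — and then specialise it to squares. The remaining bookkeeping is to charge each $b$-square against the $k+1$ blocks (using $M\ge (n-k)/(k+1)$) and each $a$-square against the letters $a$ it straddles, noting that an $a$-square through $2\ell$ letters $a$ ties up $2\ell+1$ blocks while contributing only the single free parameter $s_0$, and to combine these charges with the global identity $\sum_j i_j=n-k$ so that the weights add up to exactly $\frac{2k-1}{2k+2}$.

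The step I expect to be the main obstacle is the tight bound on the number of distinct $a$-squares: charging an $a$-square to a single block, or to a single letter $a$, overcounts badly both when the block sequence is very repetitive (the collision condition then holds for many pairs $(j,\ell)$) and when it is very spread out (one long $b$-block feeds $a$-squares to many offsets). Controlling this requires using simultaneously that distinct $a$-squares with a common collision pattern differ only in $s_0$, whose range is bounded by $\min(i_j,i_{j+\ell},i_{j+2\ell})$; that each such square blocks off $2\ell+1$ of the $k+1$ blocks; and the constraint $\sum_j i_j=n-k$ — calibrating the trade-off between these so as to land on the precise coefficient $\frac{2k-1}{2k+2}$ rather than the weaker $\frac{2k+1}{2k+2}$ or $\frac{k}{k+1}$ that cruder versions of the argument yield. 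The small cases would be handled separately; in particular for $k=2$, after cancelling the floor functions, the claim reduces to an elementary inequality among $i_0,i_1,i_2$.
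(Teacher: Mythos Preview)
Your approach is genuinely different from the paper's: you propose a direct block-combinatorial count of $a$-squares, whereas the paper never analyses individual squares at all.  Instead it proves the auxiliary inequality $m(w)+r_{\sigma(k)}\le|w|-2$ (Theorem~\ref{th:main}) via Rauzy graphs --- for every $i\le r_{\sigma(k)}$ the graph $\Gamma_w(i)$ contains a non-small circuit through the edge $b^ia$ (in your lettering) that is independent of all small circuits, so the cyclomatic bound tightens by one at each such level --- and then combines this with the odd-power observation $s(w)+\lfloor(r_{\sigma(k+1)}-1)/2\rfloor\le m(w)$ and a short arithmetic case analysis, the cases $k\le 9$ being quoted from Jonoska--Manea--Seki.

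There is, however, a real gap in your plan, and it sits exactly where you fear.  The only concrete improvement over Theorem~\ref{th:sw} that you actually write down is the odd-power credit, giving $s(w)\le|w|-2-\lfloor(M-1)/2\rfloor$; note that this is \emph{not} a strengthening of $m(w)\le|w|-2$, it is merely the use of $s(w)\le m(w)-ns(w)$.  In the balanced regime where all $i_j$ are close to $(n-k)/(k+1)$, this yields a saving of about $\tfrac{1}{2}\cdot\tfrac{n}{k+1}$, while the target $\tfrac{2k-1}{2k+2}\,n$ demands a saving of about $\tfrac{3}{2}\cdot\tfrac{n}{k+1}$: you are short by a factor of three, and this is precisely the factor the paper recovers by adding the \emph{second}-largest block $r_{\sigma(k)}$ on top of the odd-power term $\lfloor(r_{\sigma(k+1)}-1)/2\rfloor$.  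Nothing in your structural description of $a$-squares supplies an analogue of that $r_{\sigma(k)}$ term, and your own remark that cruder versions of the charging argument yield only $\tfrac{2k+1}{2k+2}$ or $\tfrac{k}{k+1}$ is the symptom.  Without either the Rauzy-graph input of Theorem~\ref{th:main} or a replacement of equal strength, the proposal cannot reach $\tfrac{2k-1}{2k+2}$.
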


In this article, we confirm this conjecture by proving the following result:

\begin{thm}
\label{th:main}
Let $w$ be a finite word over the alphabet $\left\{a, b\right\}$ such that $w=a^{r_1}ba^{r_2}ba^{r_3}b...a^{r_k}ba^{r_{k+1}}$, where $r_1,r_2,...,r_{k+1}$ are nonnegative integers and $a^n=\underbrace{aa...a}_{n \; \text{times}}$. Let $\sigma$ be a permutation of $1,2,...,k+1$ such that $r_{\sigma(1)} \leq r_{\sigma(2)}\leq...\leq r_{\sigma(k+1)}$, let $m(w)$ denote the number of distinct nonempty powers of exponent at least 2 in $w$ and let $|w|$ denote the length of $w$. Then one has $$m(w)+r_{\sigma(k)} \leq |w| -2 .$$
\end{thm}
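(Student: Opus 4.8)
The plan is to argue by induction on $|w|$, at each step deleting a single, carefully chosen letter at one of the two ends of $w$ so as to pass to a strictly shorter word of the same type. Write $M=r_{\sigma(k+1)}$ and $t=r_{\sigma(k)}$ for the largest and second largest block sizes; throughout we assume $w$ contains at least one $a$ and at least one $b$, so $M\ge1$ and $k\ge1$, and $|w|=k+\sum_i r_i$. Since the longest run of $a$'s in $w$ has length $M$, the distinct $a$-powers of exponent $\ge2$ are exactly $a^2,\dots,a^M$, and any power of exponent $\ge2$ that contains a $b$ in fact contains at least two of them (its primitive root cannot be a power of $a$); hence $m(w)=(M-1)+N(w)$, where $N(w)$ counts the distinct powers of exponent $\ge2$ using the letter $b$.

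The engine of the induction is the lemma that \emph{deleting the first letter (equivalently, by reversal, the last letter) of a word destroys at most one distinct power of exponent $\ge2$}. A power destroyed this way is one that occurs in $w$ only as a suffix; if $P\ne Q$ were two such with $|P|<|Q|$, then $P$ is a suffix of $Q=u^e$ and must satisfy $|P|>(e-1)|u|$, since otherwise the periodicity of $Q$ already exhibits an earlier occurrence of $P$. Comparing the period $|u|$ inherited by $P$ with the primitive period of $P$ and invoking the Fine--Wilf theorem forces $e=2$ and $|u|>|P|/2$, after which a short periodicity computation shows $P$ and $Q$ cannot both meet the constraints. The other ingredient is elementary bookkeeping on block sizes: a short case analysis shows that, unless $w$ has the special form $a^{M}b\,a^{r_2}b\cdots b\,a^{r_k}b\,a^{M}$ with $r_2,\dots,r_k<M$ (so that the value $M=t$ is attained only by the two end blocks), one of the two ends of $w$ admits a one-letter deletion — of an $a$ if that end block is nonempty, otherwise of the adjacent $b$ — after which the second largest block size is still $t$ (this uses $k\ge2$ for the $b$-deletion case).

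Granting these, the induction is short. Two situations are handled directly: if $k=1$, then $w=a^{r_1}b\,a^{r_2}$ carries no power using $b$, so $m(w)=M-1$ and $m(w)+t=r_1+r_2-1=|w|-2$; and if $t=0$, then $w=b^{i}a^{M}b^{k-i}$ for some $i$, which has no power using both letters, so a direct count gives $m(w)\le M+k-2=|w|-2$. Now suppose $k\ge2$ and $t\ge1$. If $w$ is not of the special form above, choose an end admitting a valid one-letter deletion and let $w'$ be the result; then $|w'|=|w|-1$, the second largest block size of $w'$ is still $t$, and, since deleting an end letter creates no new factor, $m(w)\le m(w')+1$ by the lemma, so the inductive hypothesis $m(w')+t\le|w'|-2$ yields $m(w)+t\le|w|-2$. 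If $w$ has the special form, delete the first $a$, obtaining $w^{\flat}=a^{M-1}b\,a^{r_2}b\cdots b\,a^{M}$, whose second largest block size is $M-1$. Here \emph{no} power is destroyed: a destroyed power would be a power that is a prefix of $w$ occurring nowhere else; a short such prefix is some $a^{j}$ with $2\le j\le M$, which recurs inside the final block $a^{M}$, while a longer one would begin with the full block $a^{M}$ and, being a power $u^{e}$, would force a second maximal run of $M$ consecutive $a$'s to start at position $|u|+1$ — but the only maximal runs of length $M$ in $w$ are its two end blocks, so $u^{2}$ would have to span from the first block to the last, i.e. $u^{2}=w$, which is impossible since $w$ would then contain a run of $2M$ letters $a$. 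Thus $m(w^{\flat})=m(w)$, and the inductive hypothesis for $w^{\flat}$ gives $m(w)+(M-1)\le|w^{\flat}|-2=|w|-3$, i.e. $m(w)+t\le|w|-2$ because $t=M$.

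The step I expect to be the real obstacle is the lemma that a one-letter boundary deletion destroys at most one distinct power: it must control powers of all exponents simultaneously (not just squares), and the Fine--Wilf case analysis excluding two nearly-complete repetitions with a common suffix needs to be carried out with care, in particular the case where the shorter power's primitive period divides the longer one's. The remaining ingredients — the end-block bookkeeping, the two directly-handled cases, and the no-loss argument in the special case — should be routine once the lemma is available. An alternative worth keeping in reserve is a direct injective argument in the spirit of Theorem~\ref{th:sw}, assigning to each distinct power of $w$ the start of its last occurrence and showing that this map avoids, beyond the usual two positions, all positions lying in the two longest $a$-blocks except the $M-1$ positions encoding $a^2,\dots,a^M$.
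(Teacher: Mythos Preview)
Your central lemma --- that deleting an end letter of $w$ destroys at most one distinct power of exponent $\ge 2$ --- is false. Take
\[
w \;=\; baababaaba \;=\; (baaba)^2.
\]
Here $(aba)^2 = abaaba$ occurs in $w$ only once, as the suffix $w[5..10]$, and $(baaba)^2 = w$ of course also occurs only once. Deleting the last letter kills both, so $m(w)-m(w')=2$. In your sketch, this is exactly the surviving case $e=2$, $|u|>|P|/2$ (here $|u|=5$, $|v|=3$, $|P|=6$), where Fine--Wilf does \emph{not} apply since $|u|+|v|=8>|P|$; the promised ``short periodicity computation'' cannot produce a contradiction, because there is none. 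This is precisely the Fraenkel--Simpson phenomenon that two distinct squares can have their last occurrence at the same position --- the obstruction that kept the squares conjecture open for two decades and forced the eventual proof to use a genuinely different mechanism. Your proposed alternative, mapping each power to the start of its last occurrence, breaks on the same example (reversed): in $\widetilde{w}=abaababaab$ both $(aba)^2$ and $(abaab)^2$ have their unique occurrence starting at position~$1$, so the map is not injective.

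The paper does not attempt a one-letter-deletion induction at all. It works with Rauzy graphs $\Gamma_w(i)$: an existing injection sends $M(w)$ into the set of \emph{small} circuits of $\Gamma_w$, and independence of small circuits bounds their number on each $\Gamma_w(i)$ by the cyclomatic number $C_w(i{+}1)-C_w(i)+1$. The new observation is that for every $i\le r_{\sigma(k)}$ the edge $a^i b$ lies on an elementary circuit that is \emph{not} small yet is independent of all small circuits, so the cyclomatic bound improves by one on each such level. Summing over $i$ and telescoping gives $m(w)+r_{\sigma(k)}\le |w|-2$ directly. The gain of $r_{\sigma(k)}$ thus comes from extra non-small circuits in the factor graphs, not from any per-letter accounting.
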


\section{Preliminaries}
\label{sec:prel}

Let $\sum$ be an \emph{alphabet} and $\sum^*$ be the set of all words over $\sum$. Let $w \in \sum^*$. By $|w|$, we denote its \emph{length}. A word of length $0$ is called the  \emph{empty word} and it is denoted by $\varepsilon$. A word $u$ is a \emph{factor} of $w$ if $w = pus$ for some words $p,s$. When $p = \varepsilon$ (resp. $s = \varepsilon$), $u$ is called a \emph{prefix} (resp. \emph{suffix}) of $w$. The set of all nonempty factors of $w$ is denoted by $\fac(w)$.
The number of occurrences of a factor $u \in w$ is denoted by $|w|_u$.

Let $w$ be a finite word. For any integer $i$ satisfying $1 \leq i \leq |w|$, let $L_w(i)$ be the set of all length-$i$ factors of $w$ and let $C_w(i)$ be the cardinality of $L_w(i)$.
For any natural number $k$, we define the {\em $k$-th power} of a finite word $u$ to be $u^k = u u \cdots u$ and it consists of the concatenation of $k$ copies of $u$. A finite word $w$ is said to be {\em primitive} if it is not a power of another word, that is, $w=u^k$ implies $k=1$. A \emph{square} 
is a $2$-power, that is a word $w$ satisfying $w = uu$ for a certain word $u$. 

For a finite word $w$, let $\prim(w)$ denote the set of primitive factors of $w$, let
$$M(w)=\left\{p^i|p^i \in \fac(w), p \in \prim(w), i \in \mathbf{N},i \geq 2\right\},$$
$$S(w)=\left\{p^{2i}|p^{2i} \in \fac(w), p \in \prim(w), i \in \mathbf{N},i \geq 1\right\},$$
$$NS(w)=\left\{p^{2i+1}|p^{2i+1} \in \fac(w), p \in \prim(w), i \in \mathbf{N},i \geq 1\right\},$$
and let $m(w)$,$s(w)$ and $ns(w)$ be respectively the cardinality of $M(w)$, $S(w)$ and $NS(w)$. Obviously, $M(w)=S(w) \cup NS(w)$, $m(w)=s(w)+ns(w)$ and $s(w)$ is the number of distinct nonempty squares in $w$.\\

Here we recall some elementary definitions and proprieties concerning graphs from Berge~\cite{berge}.

A {\em directed graph} consists of a nonempty set of {\em vertices} $V$ and a set of {\em edges} $E$. A vertex $a$ represents an endpoint of an edge and an edge joins two vertices $a, b$ in order.
A {\em chain} is a sequence of edges $e_1,e_2, \cdots, e_k$, such that there exists a sequence of vertices $v_1,v_2, \cdots, v_{k+1}$ and that for each $i$ satisfying $1 \leq i \leq k$, $e_i$ is either directed from $v_i$ to $v_{i+1}$ or from $v_{i+1}$ to $v_i$. A {\em cycle} is a finite chain such that $v_{k+1}=v_1$.
A {\em path} is a sequence of edges $e_1,e_2, \cdots, e_k$, such that there exists a sequence of vertices $v_1,v_2, \cdots, v_{k+1}$ and that for each $i$ satisfying $1 \leq i \leq k$, $e_i$ is directed from $v_i$ to $v_{i+1}$. A {\em circuit} is a finite path such that $v_{k+1}=v_1$.

A cycle or a circuit is called {\em elementary} if, apart from $v_1$ and $v_{k+1}$, every vertex which it meets is distinct. 
A directed graph is called {\em weakly connected} if for any couple of vertices $a,b$ in this graph, there exists a chain connecting $a$ and $b$.

Let $G$ be a weakly connected graph and let $\left\{e_1,e_2 \cdots e_l\right\}$, $\left\{v_1,v_2 \cdots v_s\right\}$ denote respectively the edge set and the vertex set of $G$. The number $\chi(G)=l-s+1$ is called the {\em cyclomatic number} of $G$.

Let $C$ be a cycle in $G$. A vector $\mu(C)=(c_1,c_2 \cdots c_l)$ in the $l$-dimensional space $\mathbb{R}^l$ is called the {\em vector-cycle corresponding to $C$} if $c_i$ is the number of visits of the edge $e_i$ in the cycle $C$ for all $i$ satisfying $1 \leq i \leq l$. The cycle $C_1,C_2, \cdots, C_k,...$ are said to be {\em independent} if their corresponding vectors are linearly independent. 

\begin{thm}[Theorem 2, Chapter 4 in~\cite{berge}]
\label{book}
the cyclomatic number of a graph is the maximum number of independent cycles in this graph.
\end{thm}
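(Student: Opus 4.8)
The plan is to group the distinct powers according to their primitive roots and to bound each group with the cyclomatic-number machinery of Theorem~\ref{book}, in the spirit of the proof of Theorem~\ref{th:sw}. Write $A=r_{\sigma(k+1)}$ for the length of the longest block of consecutive $a$'s and let $B$ be the length of the longest block of consecutive $b$'s (such a block occurs only when several internal exponents $r_i$ vanish). Every element of $M(w)$ has a primitive root that is either a single letter or a word of $\fac(w)$ containing both $a$ and $b$; I will call the latter \emph{mixed} powers. A power $a^i$ (resp. $b^i$) with $i\ge 2$ lies in $\fac(w)$ if and only if $i\le A$ (resp. $i\le B$), so the single-letter powers number exactly $\max\{0,A-1\}+\max\{0,B-1\}$. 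Hence the theorem reduces to the inequality
\[
  \#\{\text{mixed powers}\}\ \le\ |w|-2-r_{\sigma(k)}-\max\{0,A-1\}-\max\{0,B-1\},
\]
which the remaining steps aim to establish.

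For the mixed powers I would use the Rauzy graphs of $w$: for each length $n$ let $G_n$ be the graph whose vertices are the factors in $L_w(n)$ and whose edges are the factors of length $n+1$, each such edge joining the vertex equal to its length-$n$ prefix to the vertex equal to its length-$n$ suffix. Reading $w$ as a sliding window shows that every $G_n$ is weakly connected, so its cyclomatic number is $\chi(G_n)=C_w(n+1)-C_w(n)+1$, and the sum telescopes to $\sum_{n\ge 1}\chi(G_n)=C_w(|w|)-C_w(1)+(|w|-1)=|w|-2$. A power $p^i$ with $|p|=t$, when read in an appropriate graph $G_n$, traverses an elementary cycle of length $t$; the core step is to attach to each distinct mixed power an independent cycle in exactly one of the graphs $G_n$, so that Theorem~\ref{book} bounds the number of mixed powers by the total cyclomatic capacity that is \emph{not} already consumed by the single-letter powers.

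The refinement by $r_{\sigma(k)}$ should come from the two longest blocks of $a$'s. Inside a block $a^{r}$ the only factors are the words $a^j$ with $j\le r$; in the graphs $G_n$ with $n<r$ these form a non-branching chain that contributes one new vertex and one new edge at each scale, and hence adds nothing to the cyclomatic number beyond the single self-loop on the vertex $a$ that already accounts for the powers of $a$. The plan is to turn this into a local lower bound on $C_w(n)-C_w(n-1)$ coming from the two longest blocks, showing that the cyclomatic capacity in the range of scales $n\le r_{\sigma(k)}$ is used up by the letter $a$ together with a single boundary effect, so that the surplus the mixed powers cannot reach is exactly $r_{\sigma(k)}$. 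Combined with the telescoped value $|w|-2$ and the single-letter count, this would give the displayed inequality and hence the theorem.

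I expect two points to carry the real difficulty. First, a mixed power and each of its rotations give the \emph{same} vector-cycle in a Rauzy graph, and a fixed root at different exponents reuses the same edges, so the assignment of an \emph{independent} cycle to each distinct mixed power cannot be made level by level but must be spread globally across the graphs $G_n$, distributing the exponents over different scales; proving genuine linear independence of the resulting family is the main obstacle. Second, isolating the exact surplus $r_{\sigma(k)}$ requires controlling how maximal runs of $a$'s overlap mixed repetitions and share factors, so that the capacity attributed to the second-longest block is not silently demanded by some mixed power. This bookkeeping, rather than any single inequality, is where I expect the argument to be most delicate.
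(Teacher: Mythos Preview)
Your proposal does not address the stated theorem. Theorem~\ref{book} is the classical graph-theoretic fact, quoted from Berge, that the cyclomatic number of a weakly connected graph equals the maximum number of linearly independent cycles. The paper does not prove it at all; it is cited as a black box and invoked in the proof of Lemma~\ref{lem:number}. A proof of Theorem~\ref{book} would be pure linear algebra over the cycle space of a graph and would make no reference to words, Rauzy graphs, powers, or the parameters $r_{\sigma(j)}$.

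What you have actually sketched is an attempt at Theorem~\ref{th:main}. Even read that way, your outline diverges from the paper's argument and leaves the decisive step open. The paper does not split $M(w)$ into single-letter powers and ``mixed'' powers. It keeps the injection of Lemma~\ref{lem:inj} from $M(w)$ into the set of small circuits intact, and obtains the extra $r_{\sigma(k)}$ by exhibiting, for every level $1\le i\le r_{\sigma(k)}$, an additional elementary circuit $C_{sp}(i)$ in $\Gamma_w(i)$ passing through the edge $a^ib$ (Lemma~\ref{lem:circuit}). Lemma~\ref{lem:sp} shows that any such circuit is \emph{not} small and is independent of all small circuits at that level, so Theorem~\ref{book} lets the cyclomatic bound $C_w(i+1)-C_w(i)+1$ absorb one extra unit beyond the small circuits whenever $i\le r_{\sigma(k)}$. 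Summing over $i$ gives $m(w)+r_{\sigma(k)}\le |w|-2$ directly (Lemma~\ref{lem:number}). Your plan to recover the surplus $r_{\sigma(k)}$ by local bookkeeping on the two longest $a$-runs is precisely the part you yourself flag as ``most delicate,'' and you provide no mechanism for it; the $C_{sp}(i)$ device is the missing idea.
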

\section{Rauzy graphs}

Let $w$ be a finite word. For any integer $i$ satisfying $1\leq i\leq |w|$, the $i$-th Rauzy graph $\Gamma_w(i)$ of $w$ is defined to be an directed graph whose vertex set is $L_w(i)$ and the edge set is $L_w(i+1)$;
an edge $e \in L_w(i+1)$ starts at the vertex $u$ and ends at the vertex $v$, if $u$ is a
prefix and $v$ is a suffix of $e$. Let us define $\Gamma_w=\cup_{n=1}^{k-1}\Gamma_w(n)$.

Let $\Gamma_w(i)$ be a Rauzy graph of $w$ for some $i$, a sub-graph on $\Gamma_w(i)$ is called a {\em small circuit} if it is an elementary circuit and the number of its vertices is no larger than $i$. 

\begin{lem}[Lemma 8 in~\cite{Brlekli}]
\label{lem:inde}
Let $w$ be a finite word and let $\Gamma_w(i)$ be a Rauzy graph of $w$ for some
$i$. Then all small circuits on $\Gamma_w(i)$ are independent.
\end{lem}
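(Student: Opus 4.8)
\textbf{The plan is to prove Lemma~\ref{lem:inde}} by showing that any nontrivial linear combination of vector-cycles of distinct small circuits is nonzero, which certifies linear independence. First I would fix a Rauzy graph $\Gamma_w(i)$ and recall that each small circuit $C$ is an elementary circuit whose vertex set has size at most $i$; since the vertices of $\Gamma_w(i)$ are length-$i$ factors and the edges are length-$(i+1)$ factors, a small circuit that passes through at most $i$ distinct vertices and returns to its start corresponds exactly to a primitive word $p$ of length $|p| \le i$ whose powers cover the circuit. The key structural observation is the following: a small circuit with underlying primitive root $p$ consists of the cyclic sequence of length-$i$ factors of the infinite periodic word $p^\omega$, and its edges are the length-$(i+1)$ factors of $p^\omega$. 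Thus each small circuit is uniquely determined by, and in bijection with, a conjugacy class of a primitive word $p$ with $|p| \le i$.

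The heart of the argument is to isolate, for each small circuit, an edge that belongs to \emph{that circuit and to no other small circuit}, i.e.\ a private edge. I would argue as follows. Consider two distinct small circuits $C$ and $C'$ with primitive roots $p$ and $p'$ respectively. Since they are distinct elementary circuits determined by distinct conjugacy classes, the periodic words $p^\omega$ and $(p')^\omega$ are distinct. If every edge of $C$ were also an edge of $C'$, then every length-$(i+1)$ factor appearing in $p^\omega$ would also appear in $(p')^\omega$; I would use a counting/periodicity argument (for instance via the Fine--Wilf theorem) to derive a contradiction, showing that a word cannot have all its length-$(i+1)$ windows shared between two different short periods when the period lengths are at most $i$. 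This yields that $C$ possesses at least one edge not traversed by $C'$. Upgrading this pairwise statement to a simultaneous one is the delicate point: I would order the small circuits by, say, increasing root length and, among equal lengths, lexicographically, and then show that the circuit appearing last in this order retains an edge visited by no earlier circuit. Equivalently, I would identify for each circuit $C_j$ an edge $e_j$ (a length-$(i+1)$ factor) whose presence in a linear combination can only be accounted for by $C_j$ itself.

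Granting the existence of such distinguishing edges, linear independence follows by a standard triangular-elimination argument. Suppose $\sum_j \lambda_j \mu(C_j) = 0$ is a dependence among the vector-cycles. Evaluating this vector identity at the coordinate corresponding to the distinguishing edge $e_j$ of circuit $C_j$, every term except $\lambda_j$ (times the number of times $C_j$ visits $e_j$, which is a positive integer since $C_j$ is elementary and hence visits each of its edges exactly once) vanishes, forcing $\lambda_j = 0$. Running this over all $j$ kills every coefficient, so the vector-cycles are linearly independent, which is precisely the assertion.

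\textbf{The main obstacle} I anticipate is establishing the existence of a private edge for each small circuit \emph{simultaneously}, not merely pairwise; pairwise distinctness of the periodic words is easy, but a naive argument could have circuit $A$'s private-against-$B$ edge coincide with $A$'s private-against-$C$ edge being absorbed once all circuits are considered together. The right tool is almost certainly a careful invocation of the Fine--Wilf theorem to control how length-$(i+1)$ factors of short periodic words overlap, together with a well-chosen total order on the circuits so that the elimination is genuinely triangular; alternatively one can argue directly that the edge corresponding to the primitive word $p$ read as a cyclic word (the edge closing the circuit back to its start) cannot be an edge of any circuit with a strictly shorter or incomparable root. Making this selection rule precise and verifying it respects the chosen ordering is where the real work lies.
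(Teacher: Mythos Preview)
The paper does not give its own proof of this lemma: it is imported as Lemma~8 of~\cite{Brlekli} and used as a black box. There is therefore no in-paper argument to compare your proposal against.

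On the merits of your plan: the identification of small circuits with conjugacy classes of primitive words $p$ of length at most $i$ is correct and is the right starting point. However, the pairwise step you spend effort on is easier than you make it: if $C$ and $C'$ are distinct \emph{elementary} circuits then the edge set of $C$ can never be contained in that of $C'$, since a proper nonempty subset of the edges of a simple cycle is acyclic; no Fine--Wilf is needed there. The real difficulty is exactly the one you flag at the end---upgrading to a \emph{simultaneous} private edge, or at least to a triangular ordering---and your sketch does not resolve it. Note also that distinct small circuits genuinely can share edges (so independence is not mere edge-disjointness): with $i=5$, the small circuits with primitive roots $aaba$ and $aabaa$ both contain the edge $aabaaa$, because this length-$6$ word has periods $4$ and $5$ while $4+5-\gcd(4,5)=8>6$ lies outside the Fine--Wilf range. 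Hence the selection rule you allude to must be argued carefully, and as written the proposal is a plan with an acknowledged hole rather than a proof.
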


\begin{lem}[Lemma 6 in~\cite{li202202}]
\label{lem:inj}
Let $w$ be a finite word, then there exists an injection from $M(w)$ to the set of all small circuits on $\Gamma_w$.
\end{lem}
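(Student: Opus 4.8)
The plan is to combine the injection of Lemma~\ref{lem:inj} with a cycle-space count on each Rauzy graph $\Gamma_w(i)$, improving the crude telescoping estimate by exhibiting, for the small indices $i\le r_{\sigma(k)}$, a cycle that is \emph{not} a small circuit. First I would record the crude bound. By Lemma~\ref{lem:inj}, $m(w)$ is at most the total number of small circuits occurring in the graphs $\Gamma_w(i)$ for $1\le i\le |w|-1$. Each $\Gamma_w(i)$ is weakly connected, since $w$ traces a walk visiting all its length-$i$ factors, so its cyclomatic number is $\chi(\Gamma_w(i))=C_w(i+1)-C_w(i)+1$; by Lemma~\ref{lem:inde} the small circuits of $\Gamma_w(i)$ are independent, whence Theorem~\ref{book} gives that their number is at most $\chi(\Gamma_w(i))$. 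Assuming both letters occur (so $C_w(1)=2$ and $C_w(|w|)=1$), summing over $i$ and telescoping yields $\sum_i\chi(\Gamma_w(i))=|w|-2$, which recovers $m(w)\le|w|-2$. Everything then reduces to saving a further $r_{\sigma(k)}$ from the per-level estimate.

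The heart of the proof is a local gain, claimed for every $i$ with $1\le i\le r_{\sigma(k)}$: the number of small circuits in $\Gamma_w(i)$ is at most $\chi(\Gamma_w(i))-1$. To obtain it I single out the edge $e_i=ba^i$, the length-$(i+1)$ factor, which is a genuine edge of $\Gamma_w(i)$ in this range: because $r_{\sigma(k)}\le r_{\sigma(k+1)}$, at least two blocks have length $\ge i$, hence at least one such block is preceded by a $b$ and $ba^i\in\fac(w)$. Two facts about $e_i$ are needed. First, no small circuit of $\Gamma_w(i)$ uses $e_i$: a small circuit reads a word of period $d\le i$, but $ba^i$ cannot occur in a word of period $\le i$, as the letter $b$ would then reappear inside the block $a^i$. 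Second, $e_i$ lies on a cycle, i.e.\ it is not a bridge: choosing two blocks $a^{r_p},a^{r_q}$ of length $\ge i$ with $p<q$, consecutive among the blocks of length $\ge i$ and with $q\ge2$ (so $a^{r_q}$ is preceded by $b$), the portion of the defining walk of $w$ running from the last length-$i$ window $a^i$ inside $a^{r_p}$ to the first such window inside $a^{r_q}$ is a closed walk at the vertex $a^i$ that traverses $e_i$ exactly once; thus $e_i$ has coefficient $1$ in the associated cycle-space element and is not a bridge. Combining the two facts, any cycle $D$ through $e_i$ is independent from the small circuits, since every small circuit vanishes on the coordinate $e_i$ while $D$ does not; by Lemma~\ref{lem:inde} and Theorem~\ref{book} this forces $\chi(\Gamma_w(i))$ to exceed the number of small circuits by at least one.

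Finally I would assemble the pieces, splitting the sum according to whether $i\le r_{\sigma(k)}$. The local gain gives
$$\sum_{i=1}^{|w|-1}(\text{number of small circuits in }\Gamma_w(i))\le\sum_{i=1}^{|w|-1}\chi(\Gamma_w(i))-r_{\sigma(k)}=|w|-2-r_{\sigma(k)},$$
and together with $m(w)\le\sum_i(\text{number of small circuits in }\Gamma_w(i))$ from Lemma~\ref{lem:inj} this reads $m(w)+r_{\sigma(k)}\le|w|-2$. I expect the main obstacle to be the second fact above, namely showing that $e_i$ is truly on a cycle rather than a bridge; the closed-walk-between-two-long-blocks construction is precisely what makes the hypothesis $i\le r_{\sigma(k)}$ (two blocks of length at least $i$) indispensable, and one must verify that no further block of length $\ge i$ lies strictly between the two chosen blocks, so that $e_i$ is indeed traversed an odd number of times. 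The degenerate unary case, where $w$ contains a single letter, should be excluded, since there the telescoping constant $C_w(1)$ differs and the statement is not intended to apply.
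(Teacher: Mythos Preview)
Your write-up does not prove the stated lemma at all. Lemma~\ref{lem:inj} asserts the existence of an injection from $M(w)$ to the set of small circuits of $\Gamma_w$; in the paper it is quoted without proof from~\cite{li202202}, and you treat it the same way, invoking it as a black box (``by Lemma~\ref{lem:inj}, $m(w)$ is at most the total number of small circuits\ldots''). What you have actually written is an argument for the inequality $m(w)+r_{\sigma(k)}\le|w|-2$, i.e.\ for Theorem~\ref{th:main}, using Lemma~\ref{lem:inj} as one of the ingredients.

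Read as a proof of Theorem~\ref{th:main}, your argument is correct and follows the paper's route almost verbatim. The paper singles out the edge $a^ib$ in $\Gamma_w(i)$, shows that no small circuit can contain it and that for $1\le i\le r_{\sigma(k)}$ it lies on some circuit (Lemmas~\ref{lem:sp} and~\ref{lem:circuit}); this extra independent cycle lowers the cyclomatic bound on the number of small circuits by one at each such level, and telescoping gives Lemma~\ref{lem:number} and hence Theorem~\ref{th:main}. You do precisely the same with the mirror edge $ba^i$: your periodicity reason for ``no small circuit uses $ba^i$'' is a rephrasing of the paper's observation that the only small circuit through the vertex $a^i$ is the self-loop on $a^{i+1}$, and your closed walk between two consecutive long $a$-blocks is the paper's factor $a^iXa^i$ viewed from the other end. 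The differences are cosmetic; nothing new is needed, and nothing is missing, for Theorem~\ref{th:main}. But none of this addresses the construction of the injection that Lemma~\ref{lem:inj} claims.
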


\begin{exa}
Let us consider the word $u=abaaabaaaabaaba$, the Rauzy graph $\Gamma_u(4)$ is as follows:
\begin{center}
\begin{tikzpicture}[scale=0.2]
\tikzstyle{every node}+=[inner sep=0pt]
\draw [black] (19.8,-19.4) circle (3);
\draw (19.8,-19.4) node {$aaab$};
\draw [black] (45.5,-19.4) circle (3);
\draw (45.5,-19.4) node {$aaba$};
\draw [black] (19.8,-42.7) circle (3);
\draw (19.8,-42.7) node {$baaa$};
\draw [black] (45.5,-42.7) circle (3);
\draw (45.5,-42.7) node {$abaa$};
\draw [black] (60.6,-30.9) circle (3);
\draw (60.6,-30.9) node {$baab$};
\draw [black] (32.3,-30.9) circle (3);
\draw (32.3,-30.9) node {$aaaa$};
\draw [black] (30.09,-28.87) -- (22.01,-21.43);
\fill [black] (22.01,-21.43) -- (22.26,-22.34) -- (22.94,-21.6);
\draw (28.84,-24.66) node [above] {$aaaab$};
\draw [black] (42.5,-42.7) -- (22.8,-42.7);
\fill [black] (22.8,-42.7) -- (23.6,-43.2) -- (23.6,-42.2);
\draw (32.65,-42.2) node [above] {$abaaa$};
\draw [black] (19.8,-39.7) -- (19.8,-22.4);
\fill [black] (19.8,-22.4) -- (19.3,-23.2) -- (20.3,-23.2);
\draw (20.3,-31.05) node [right] {$baaab$};
\draw [black] (22.8,-19.4) -- (42.5,-19.4);
\fill [black] (42.5,-19.4) -- (41.7,-18.9) -- (41.7,-19.9);
\draw (32.65,-19.9) node [below] {$aaaba$};
\draw [black] (45.5,-22.4) -- (45.5,-39.7);
\fill [black] (45.5,-39.7) -- (46,-38.9) -- (45,-38.9);
\draw (45,-31.05) node [left] {$aabaa$};
\draw [black] (21.98,-40.64) -- (30.12,-32.96);
\fill [black] (30.12,-32.96) -- (29.19,-33.14) -- (29.88,-33.87);
\draw (28.84,-37.28) node [below] {$baaaa$};
\draw [black] (47.86,-40.85) -- (58.24,-32.75);
\fill [black] (58.24,-32.75) -- (57.3,-32.85) -- (57.91,-33.63);
\draw (55.89,-37.3) node [below] {$abaab$};
\draw [black] (58.21,-29.08) -- (47.89,-21.22);
\fill [black] (47.89,-21.22) -- (48.22,-22.1) -- (48.83,-21.3);
\draw (55.89,-24.65) node [above] {$baaba$};
\end{tikzpicture}
\end{center}

In this graph, there are three circuits: $C_1=\left\{\left\{aaab,aaba,abaa,baaa\right\},\left\{aaaba,aabaa,abaaa,baaab\right\}\right\}$, $C_2=\left\{\left\{aaba,abaa,baab\right\},\left\{aabaa,abaab,baaba\right\}\right\}$ and \\$C_3=\left\{\left\{aaab,aaba,abaa,baaa,aaaa\right\},\left\{aaaba,aabaa,abaaa,baaaa,aaaab\right\}\right\}$. Two of them are small, they are $C_1$ and $C_2$, while $C_3$ is not small. \qed
\end {exa}

\section{Proof of Theorem~\ref{th:main} }
\begin{lem}
\label{lem:sp}
Let $w \in \left\{a,b\right\}^*$ and let $i$ be an integer satisfying $1 \leq i \leq |w|$. If there exists an elementary circuit on $\Gamma_w(i)$ containing the edge $a^ib$, then this circuit is not a small circuit and it is independent with all the small circuits on $\Gamma_w(i)$. Let $C_{sp}(i)$ denote one of these circuits (if any). 
\end{lem}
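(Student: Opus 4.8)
The plan is to analyze the structure of an elementary circuit containing the edge $a^ib$ directly. First I would observe what the vertex $a^i$ and the edge $a^ib$ mean: $a^i$ is the (unique) length-$i$ factor of $w$ consisting only of $a$'s, and $a^ib$ is the length-$(i+1)$ factor consisting of $a^i$ followed by $b$. An elementary circuit $C$ through the edge $a^ib$ is a path $v_1 \to v_2 \to \cdots \to v_\ell \to v_1$ with distinct vertices; set $v_1 = a^i$, so that the first edge $v_1 \to v_2$ is $a^ib$ and hence $v_2 = a^{i-1}b$. I want to show two things: (i) $C$ has strictly more than $i$ vertices, so it is not a small circuit; and (ii) $C$ is independent of the collection of all small circuits.

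For (i), the key point is that no vertex of $C$ other than $v_1$ can contain the letter $b$ in a position that would force it to equal a rotation conflicting with elementariness — more precisely, I would argue that the circuit must ``reload'' the run of $a$'s. Starting from $v_2 = a^{i-1}b$, each subsequent edge shifts the window by one letter; to return to $v_1 = a^i$, which has no $b$, the circuit must pass through vertices $a^{i-1}b, a^{i-2}b?, \ldots$ and eventually build the block $a^i$ back up one letter at a time, going through $ba^{i-1}, ba^{i-2}a?, \ldots$ Since $a^i$ must be preceded in the circuit by the vertex $ba^{i-1}$ (the only length-$i$ factor ending in $a^i$'s last $i-1$ letters and having a $b$ available to re-enter), and since building a length-$i$ run of $a$'s from a window that currently contains a $b$ requires at least $i$ intermediate shifts, the circuit visits at least $i+1$ distinct vertices. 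A cleaner way to phrase this: a small circuit on $\Gamma_w(i)$ has at most $i$ vertices, hence corresponds to a factor $p^{\lceil (i+1)/|p| \rceil}$ or so with $|p| \le i$; but any circuit through $a^ib$ corresponds to a conjugacy class of a primitive word $p$ with $|p| \ge i+1$ (because $a^i$ is a factor of $p^2$ forcing $|p| > i$ unless $p$ is a power of $a$, which is excluded since the circuit also contains the letter $b$). Thus $C$ has at least $i+1$ vertices and is not small.

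For (ii), I would invoke the same linear-algebraic framework as Lemma~\ref{lem:inde}. By Lemma~\ref{lem:inde} the vector-cycles of all small circuits on $\Gamma_w(i)$ are linearly independent; I must show $\mu(C)$ is not in their span. The decisive observation is that $C$ is the \emph{only} circuit whose vector-cycle has a nonzero coordinate at the edge $a^ib$: indeed, in $\Gamma_w(i)$ the vertex $a^i$ has out-edges only $a^ib$ (and possibly $a^{i+1}$ if $a^{i+1}$ is a factor, but that edge is a loop on a different vertex $a^{i+1}$... — I mean $a^ia$ which is not even a length-$(i+1)$ factor window of the right form unless $a^{i+1} \in \fac(w)$), and any small circuit has $\le i$ vertices so cannot contain $a^i$ together with a $b$-shifted vertex and return — more carefully, a small circuit containing $a^ib$ would itself be a circuit through $a^ib$, contradicting that such circuits are not small. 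Hence the coordinate of $\mu(C)$ indexed by the edge $a^ib$ is positive, while every small-circuit vector-cycle has a zero in that coordinate; therefore $\mu(C)$ cannot be a linear combination of small-circuit vectors, giving independence.

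The main obstacle I anticipate is making step (i) fully rigorous: pinning down that an elementary circuit through $a^ib$ genuinely needs more than $i$ vertices requires a careful case analysis of how the sliding window regenerates a maximal $a$-run, and in particular handling the possibility $r_j = i$ for several $j$ (so that $a^i$ appears multiple times) and the interaction with the loop edge at the all-$a$ vertex when $a^{i+1} \in \fac(w)$. I would isolate this as the technical heart of the argument, probably by showing the primitive word $p$ associated to $C$ satisfies $a^i \in \fac(p^2)$ with $p \notin a^*$, which by the synchronization/Fine–Wilf-type reasoning forces $|p| \ge i+1$, and hence $|C| = |p| \ge i+1 > i$.
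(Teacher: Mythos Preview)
Your proposal is correct, and part (ii) matches the paper exactly: the edge $a^ib$ lies in no small circuit, so its coordinate separates $\mu(C)$ from the span of the small-circuit vectors.

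For part (i), however, the paper takes a shorter route that dissolves the obstacle you flag at the end. Rather than analysing circuits through the \emph{edge} $a^ib$, the paper classifies small circuits through the \emph{vertex} $a^i$. Suppose $e_1,\dots,e_\ell$ is a path from $a^i$ back to $a^i$ with $\ell\le i$, and let $p=l_1\cdots l_\ell$ be the word of last letters of the edges. Then $a^i$ is a suffix of $a^ip$; since $|p|=\ell\le i$, every $l_j$ must equal $a$, hence every $e_j=a^{i+1}$. So the \emph{only} small circuit through $a^i$ is the self-loop $\{\{a^i\},\{a^{i+1}\}\}$. It follows at once that no small circuit contains the edge $a^ib$, giving both (i) and (ii) in one stroke. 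This suffix argument is exactly your ``the circuit must reload the run of $a$'s'' intuition, but phrased so that no Fine--Wilf reasoning, no tracking of the $b$ through successive windows, and no worry about the coexistence of the loop $a^{i+1}$ is needed. Your primitive-word formulation (an elementary circuit corresponds to a primitive $p$; $a^i\in\fac(p^\omega)$ with $|p|\le i$ forces $p\in a^*$) is a valid alternative and arrives at the same conclusion, just with slightly more machinery.
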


\begin{proof}
We first prove that, if there exists a small circuit passing through the vertex $a^i$, then it should be the sub-graph $ \left\{\left\{a^i\right\},\left\{a^{i+1}\right\}\right\}$ of $\Gamma_w(i)$. In fact, if there exists a path $e_1,e_2,...,e_k$ from $a^i$ to $a^i$ satisfying $k \leq i$, let $p=l_1l_2...l_k$ be a word such that $l_j$ is the last letter of $e_j$ for all $j$ satisfying that $1 \leq j \leq k$. From the hypothesis that $e_1,e_2,...,e_k$ form a circuit, we can deduce that $a^i$ is a suffix of $a^ip$. Moreover, as $|p|=k \leq i$, $l_j=a$ for all $j$. Consequently, $e_j=a^{i+1}$ for all $j$. Thus, from the unicity of each edge, we prove that there exists only one edge on the path and the graph is given by  $\left\{\left\{a^i\right\},\left\{a^{i+1}\right\}\right\}$.

If their exists an elementary circuit $C$ on $\Gamma_w(i)$ containing the edge $a^ib$, from the fact that $a^ib$ cannot be contained in any small circuit on $\Gamma_w(i)$, we conclude that $C$ is not a small circuit and independent with all the small circuits on $\Gamma_w(i)$.\qed

\end{proof}

\begin{lem}
\label{lem:circuit}
Let $w \in \left\{a,b\right\}^*$ such that $w=a^{r_1}ba^{r_2}ba^{r_3}b...a^{r_k}ba^{r_{k+1}}$ with $r_j \geq 0$ for all $j$ satisfying $1 \leq j \leq k+1$. Let $\sigma$ be a permutation of $1,2,...,k+1$ such that $r_{\sigma(1)} \leq r_{\sigma(2)}\leq...\leq r_{\sigma(k+1)}$. Then for any integer $i$ satisfying $1\leq i \leq \sigma(k)$, there exists a $C_{sp}(i)$ on $\Gamma_w(i)$.
\end{lem}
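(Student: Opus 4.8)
The plan is to show, for each $i$ with $1 \le i \le r_{\sigma(k)}$ (I read the statement's "$\sigma(k)$" as shorthand for $r_{\sigma(k)}$, the second-largest block length), that the Rauzy graph $\Gamma_w(i)$ actually contains the edge $a^i b$ and that this edge lies on some elementary circuit; Lemma~\ref{lem:sp} then immediately upgrades this to the existence of a $C_{sp}(i)$. So the real content is: (1) $a^i b$ is a factor of $w$, i.e.\ $a^i b \in L_w(i+1)$; and (2) the vertex $a^i$ of $\Gamma_w(i)$ sits on an elementary circuit passing through the edge $a^i b$.

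For step (1): the block lengths are $r_1, \dots, r_{k+1}$, and the hypothesis $i \le r_{\sigma(k)}$ means at least two of the blocks have length $\ge i$; in particular at least one block of the form $a^{r_j}$ with $1 \le j \le k$ (i.e.\ a block that is genuinely followed by a $b$ in $w$) has $r_j \ge i$, unless the two long blocks are both "interior-but-one" in an awkward way — I would just note that among the $k+1$ blocks at most one is the last block $a^{r_{k+1}}$, so if two blocks have length $\ge i$ then some block $a^{r_j}$ with $j \le k$ has length $\ge i$, and then $a^i b$ occurs inside $a^{r_j} b$. Hence $a^i b \in \fac(w)$, so it is an edge of $\Gamma_w(i)$, and $a^i$ (and $a^{i-1}b$, the source of that edge — wait, the source of $a^i b$ is $a^i$ and the target is $a^{i-1}b$) are both vertices.

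For step (2), the key observation is that $\Gamma_w(i)$ is \emph{weakly connected} (it is the Rauzy graph of a single word, so its underlying graph is connected), and moreover every vertex of a Rauzy graph of a finite word has the property that it lies on at least one circuit \emph{except} possibly for "source"/"sink" behaviour at the very ends of $w$ — but here I can be concrete. Since $i \le r_{\sigma(k)}$, the word $a^i$ occurs at least twice in $w$ as a factor with genuinely different right extensions: once it is followed by $b$ (from a block $a^{r_j} b$ with $r_j \ge i$, $j \le k$), and — using that a \emph{second} block has length $\ge i$ — I can arrange that $a^i$ also occurs followed by $a$, OR that $a^i$ occurs preceded by $b$. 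Concretely, I want to exhibit an elementary circuit through $a^i b$: start at $a^i$, take the edge $a^i b$ to $a^{i-1} b$, then walk forward through $\Gamma_w(i)$ — each step appends the next letter of $w$ — until we return to $a^i$; because $a^i$ occurs again further along in $w$ (guaranteed by having two blocks of length $\ge i$, so there is an occurrence of $a^i$ strictly after the first $b$ we used), such a walk exists, and by taking the first return to $a^i$ we get an \emph{elementary} circuit. I would spell out the walk as: read $w$ from the position of a chosen occurrence of $a^i b$, record the length-$i$ windows, stop at the first repeat of the vertex $a^i$; primitivity/elementariness of the resulting circuit follows because we stop at the first repetition.

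The main obstacle I anticipate is the bookkeeping at the boundary: ensuring that when $i = r_{\sigma(k)}$ exactly, there really is a "second" occurrence of $a^i$ positioned so that the forward walk from $a^i b$ comes back to $a^i$ rather than running off the end of $w$ — this is where I need that \emph{two} blocks (not one) have length $\ge i$, and I must check the degenerate cases where one of those two long blocks is the final block $a^{r_{k+1}}$ (which has no following $b$) or where $k$ is small. I expect these to be handled by a short case analysis: if the two long blocks are $a^{r_j}, a^{r_{j'}}$ with $j < j' \le k$ then $a^i$ is followed by $b$ in both places and the stretch of $w$ between them gives the return path; if $j' = k+1$ is the last block, then reading $w$ backward from $a^{r_{k+1}}$ reaches the occurrence in $a^{r_j}$, and one takes the circuit in the other orientation (or uses that $a^i b$ still lies on the corresponding elementary circuit since circuits are orientation-free in the cyclomatic count but here we need a genuine circuit — so in that sub-case I would instead locate $a^i$ followed by $b$ at $a^{r_j}$ and then follow $w$ forward, which must eventually meet $a^i$ again because $i \le r_{r_{\sigma(k+1)}}$ guarantees a later all-$a$ window of length $i$). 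Once the circuit is produced, Lemma~\ref{lem:sp} finishes the proof with no further work.
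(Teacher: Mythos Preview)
Your approach is essentially the paper's: use the two blocks of length $\ge i$ to exhibit a factor $a^i X a^i$ of $w$ with $X$ beginning (and ending) in $b$, read off from it a circuit in $\Gamma_w(i)$ through the edge $a^i b$, and invoke the definition of $C_{sp}(i)$. One small correction: stopping at the \emph{first return to $a^i$} does not by itself make the circuit elementary, since other vertices may repeat before $a^i$ does; both you and the paper tacitly rely on the standard fact that any edge lying on a circuit lies on some elementary circuit, which closes this gap.
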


\begin{proof}
From the hypothesis that $i \leq r_{\sigma(k)} \leq r_{\sigma(k+1)}$, there exists a nonempty word $X$ such that $a^iXa^i \in \fac(w)$ and that 
$$X=\begin{cases}
bYb\; \text{if $|X| \geq 2$};\\
b \;\;\;\;\;\text{otherwise}.
\end{cases}
$$
Moreover, we can suppose that $a^i \not \in \fac(Y)$. Indeed, there exists a circuit on $\Gamma_{a^iXa^i}(i)$ containing the edge $a^ib$ and $\Gamma_{a^iXa^i}(i)$ is a sub-graph of $\Gamma_{w}(i)$. Thus, there exists a $C_{sp}(i)$ on $\Gamma_w(i)$.\qed
\end{proof}

\begin{lem}
\label{lem:number}
Let $w \in \left\{a,b\right\}^*$ and let $I_w$ be the cardinality of $$S_w=\left\{i| 1 \leq i \leq |w|, \; \text{there exists a  circuit $C_{sp}(i)$ on $\Gamma_w(i)$}\right\},$$ then $$m(w)+I_w \leq |w|- |\Alphabet(w)|.$$
\end{lem}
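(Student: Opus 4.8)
The plan is to refine, level by level, the cyclomatic-number counting that underlies Theorem~\ref{th:sw}, awarding ourselves one extra independent cycle at each level $i\in S_w$. Throughout, $C_w(j)=|L_w(j)|$ denotes the number of length-$j$ factors of $w$, so that the edge set of $\Gamma_w(i)$ has size $C_w(i+1)$ and its vertex set has size $C_w(i)$.

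First I would fix an integer $i$ with $1\le i\le |w|-1$; the value $i=|w|$ can be ignored, since $\Gamma_w(|w|)$ has a single vertex and no edge, hence carries no circuit and does not lie in $S_w$. I would then record two facts about $\Gamma_w(i)$. It is weakly connected, because the sequence of its vertices read along $w$ from left to right visits every vertex; hence the cyclomatic number is defined and $\chi(\Gamma_w(i))=C_w(i+1)-C_w(i)+1$. Next, on this fixed graph I would consider the family consisting of all small circuits together with the circuit $C_{sp}(i)$ in case $i\in S_w$. By Lemma~\ref{lem:inde} the small circuits are independent; by the proof of Lemma~\ref{lem:sp} the edge $a^ib$ occurs in $C_{sp}(i)$ but in no small circuit, so the coordinate of $\mu(C_{sp}(i))$ indexed by $a^ib$ is nonzero while that coordinate vanishes for every small-circuit vector. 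Hence $\mu(C_{sp}(i))$ is not in the span of the small-circuit vectors, and the whole family is independent. By Theorem~\ref{book} its cardinality is at most $\chi(\Gamma_w(i))$: writing $t_i$ for the number of small circuits on $\Gamma_w(i)$ and $\epsilon_i=1$ if $i\in S_w$ and $\epsilon_i=0$ otherwise,
\[
 t_i+\epsilon_i \;\le\; C_w(i+1)-C_w(i)+1 .
\]

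Then I would sum this inequality over $i=1,\dots,|w|-1$. The right-hand side telescopes to $\bigl(C_w(|w|)-C_w(1)\bigr)+(|w|-1)=(1-|\Alphabet(w)|)+(|w|-1)=|w|-|\Alphabet(w)|$. On the left, $\sum_i\epsilon_i=I_w$ because $S_w\subseteq\{1,\dots,|w|-1\}$, and $\sum_i t_i$ is exactly the total number of small circuits on $\Gamma_w=\bigcup_i\Gamma_w(i)$: a small circuit of $\Gamma_w(i)$ has all of its vertices of length $i$, so it is attached to exactly one level and the families at different levels are disjoint. By the injection of Lemma~\ref{lem:inj}, $m(w)\le\sum_i t_i$. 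Combining, $m(w)+I_w\le\sum_i t_i+I_w\le |w|-|\Alphabet(w)|$, which is the claim.

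I expect the only genuinely delicate step to be the independence of the enlarged family at a fixed level: pairwise independence of $C_{sp}(i)$ with each small circuit is not enough, and one must use the $a^ib$-coordinate argument (supplied by the proof of Lemma~\ref{lem:sp}) to place $\mu(C_{sp}(i))$ outside the span of \emph{all} small-circuit vectors simultaneously. Everything else is the telescoping bookkeeping already present in the proof of Theorem~\ref{th:sw}, plus the harmless observation that small circuits at distinct levels cannot coincide.
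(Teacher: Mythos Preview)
Your proposal is correct and follows essentially the same route as the paper's own proof: bound, at each level $i$, the number of small circuits plus the indicator of $i\in S_w$ by the cyclomatic number $C_w(i+1)-C_w(i)+1$, sum and telescope, and finish with the injection of Lemma~\ref{lem:inj}. You supply a bit more detail than the paper (weak connectedness of $\Gamma_w(i)$, the explicit $a^ib$-coordinate argument showing $\mu(C_{sp}(i))$ lies outside the span of all small-circuit vectors, and the harmless exclusion of $i=|w|$), but the structure is identical.
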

\begin{proof}
Let $\Sc_w(i)$ denote the number of small circuits on $\Gamma_w(i)$. From Lemma~\ref{lem:inde}, Lemma~\ref{lem:sp} and Theorem~\ref{book}, for any $i \in S_w$,
$$\Sc_w(i)+1 \leq C_w(i+1)-C_w(i)+1;$$
and for any $i \not \in S_w$,
$$\Sc_w(i) \leq C_w(i+1)-C_w(i)+1.$$ 
Consequently, $$\sum_{i=1}^{|w|}\Sc_w(i) + I_w \leq \sum_{i=1}^{|w|} C_w(i+1)-C_w(i)+1=|w|-|\Alphabet(w)|.$$
Moreover, from~\ref{lem:inj}, $m(w) \leq \sum_{i=1}^{|w|}\Sc(i)$, thus, $m(w)+I_w \leq |w|-|\Alphabet(w)|$.
\qed
\end{proof}

\begin{proof}[of Theorem~\ref{th:main}]
It is a direct consequence of Lemma~\ref{lem:number} and Lemma~\ref{lem:circuit}.
\qed
\end{proof}

\section{Proof of Theorem~\ref{th:conj}}

In this section, Let $w \in \left\{a,b\right\}^*$ such that $|w|_a \geq |w|_b$, that $|w|_b=k$ and that $w=a^{r_1}ba^{r_2}ba^{r_3}b...a^{r_k}ba^{r_{k+1}}$ with $r_j \geq 0$ for all $j$ satisfying $1 \leq j \leq k+1$.

Let $\delta$ be an integer such that $|w|_a=k+\delta$. From the hypothesis $|w|_a\geq |w|_b$, we can suppose $\delta \geq 0$ and $|w|=2k+ \delta$. Let $\delta=n(k+1)+i$ with $n \geq 0$ and $0 \leq i \leq k$. 

Let $\sigma$ be a permutation of $1,2,...,k+1$ such that $r_{\sigma(1)} \leq r_{\sigma(2)}\leq...\leq r_{\sigma(k+1)}$.

For any real number $x$, let $\lfloor x \rfloor$ to be the integer part of $x$.

\begin{lem}[Proposition 2 in~\cite{Jono} ]
\label{lem:pre}
Theorem~\ref{th:conj} holds if $k \leq 9$.
\end{lem}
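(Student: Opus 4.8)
The statement is quoted from~\cite{Jono} (Proposition~2 there); I sketch how one might instead recover it from the tools of the previous sections. Keep the notation above, so that $|w|=2k+\delta$ with $\delta=|w|_a-k\ge 0$.

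\emph{Step 1: reduce to a restricted range of block profiles.} Every square is a nonempty power of exponent $\ge 2$, so $s(w)\le m(w)$, and Theorem~\ref{th:main} gives
$$s(w)\ \le\ |w|-2-r_{\sigma(k)}.$$
A one-line rearrangement shows that $|w|-2-r_{\sigma(k)}\le\frac{2k-1}{2k+2}|w|$ is equivalent to $r_{\sigma(k)}\ge\frac{2k-4+3\delta}{2k+2}$. Hence Theorem~\ref{th:conj} is immediate whenever the second largest block $r_{\sigma(k)}$ reaches this threshold, and I would be left only with the residual range $r_{\sigma(k)}<\frac{2k-4+3\delta}{2k+2}$.

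\emph{Step 2: count distinct squares directly in the residual range.} Here I would work from the decomposition $w=a^{r_1}ba^{r_2}b\cdots ba^{r_{k+1}}$, which has at most $k+1\le 10$ runs of $a$'s. Classify a square $xx$ by the (necessarily even) number $2m$ of letters $b$ in its primitive root $x$: if $m=0$ then $x=a^{j}$ and there are exactly $\lfloor r_{\sigma(k+1)}/2\rfloor$ such squares; if $m\ge 1$ then $x=a^{p_0}ba^{p_1}b\cdots ba^{p_{2m}}$ meets at most $2m+1\le 2k+1$ runs, the inner maximal $a$-blocks of $xx$ are forced to equal whole runs of $w$, and a run-by-run argument bounds the number of admissible exponent vectors $(p_0,\dots,p_{2m})$ — hence the number of such squares — linearly in $(r_1,\dots,r_{k+1})$. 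Summing these contributions, substituting $|w|=2k+\delta$, and using the residual hypothesis on $r_{\sigma(k)}$ to bound the longest run $r_{\sigma(k+1)}$, the desired inequality $s(w)\le\frac{2k-1}{2k+2}|w|$ becomes, for each fixed $k\in\{2,\dots,9\}$, a finite family of linear inequalities in the run lengths — one family per \emph{shape}, i.e.\ per choice of which runs stay bounded and which grow with $\delta$ — which I would verify case by case.

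\emph{Main obstacle.} The difficulty lies entirely in Step~2. The estimate $s(w)\le|w|-2-r_{\sigma(k)}$ of Theorem~\ref{th:main} is sharp enough only when $r_{\sigma(k)}$ is comparable to $\delta$; in the \emph{balanced} residual configurations — for instance $w$ close to $(a^{t}b)^{k}a^{t}$, which is nearly periodic and saturated with overlapping squares — one must count \emph{distinct squares} (not merely distinct powers) with some care, keeping every estimate uniform over $2\le k\le 9$. This finite but intricate verification is exactly what is carried out in~\cite{Jono}, to which I defer for the details.\qed
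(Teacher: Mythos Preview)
The paper gives no proof of this lemma at all: it is simply imported verbatim as Proposition~2 of~\cite{Jono} and used as a black box to cover the small cases $k\le 9$. Your proposal does the same in the end (you explicitly defer to~\cite{Jono} for the actual verification), so there is nothing to compare and no discrepancy.

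Two small remarks on your sketch, should you ever try to flesh it out. First, in Step~2 the phrase ``the (necessarily even) number $2m$ of letters $b$ in its primitive root $x$'' is garbled: for a square $xx$ the number of $b$'s in $x$ is an arbitrary nonnegative integer $m$, and it is $|xx|_b=2m$ that is even; your subsequent formula $x=a^{p_0}ba^{p_1}b\cdots ba^{p_{2m}}$ should read $a^{p_0}b\cdots ba^{p_m}$. Second, your Step~1 reduction via Theorem~\ref{th:main} is correct and is exactly the mechanism the paper uses for $k\ge 10$, but note that~\cite{Jono} predates Theorem~\ref{th:main}, so their argument for $k\le 9$ cannot and does not use it; they prove the small cases from scratch.
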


\begin{lem}
\label{lem:imply1}
$$s(w)+ 1+\lfloor\frac{3n}{2}+\frac{3i}{2(k+1)}\rfloor \leq|w|-2 \implies s(w) \leq \frac{2k-1}{2k+2}|w|. $$
\end{lem}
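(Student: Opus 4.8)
The plan is to derive the bound $s(w) \leq \frac{2k-1}{2k+2}|w|$ from the hypothesis purely by arithmetic manipulation, using the parametrization $|w| = 2k+\delta$ and $\delta = n(k+1)+i$ with $n \geq 0$, $0 \leq i \leq k$ that was fixed at the start of this section. First I would rewrite the target inequality. Since $|w| = 2k+\delta = (2k+2) + (\delta - 2)$, we have
\[
\frac{2k-1}{2k+2}|w| = |w| - \frac{3}{2k+2}|w| = |w| - \frac{3(2k+\delta)}{2k+2},
\]
so the conclusion $s(w) \leq \frac{2k-1}{2k+2}|w|$ is equivalent to
\[
s(w) \leq |w| - \frac{3(2k+\delta)}{2k+2} = |w| - 3 - \frac{3\delta - 6}{2k+2} = |w| - 2 - 1 - \frac{3\delta-6}{2k+2}.
\]
Thus it suffices to show that the hypothesis's left-hand side dominates this, i.e. that
\[
1 + \left\lfloor \frac{3n}{2} + \frac{3i}{2(k+1)} \right\rfloor \;\geq\; 1 + \frac{3\delta - 6}{2k+2},
\]
which (after cancelling the $1$) reduces to the elementary claim
\[
\left\lfloor \frac{3n}{2} + \frac{3i}{2(k+1)} \right\rfloor \;\geq\; \frac{3\delta - 6}{2k+2}.
\]

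Next I would substitute $\delta = n(k+1) + i$ into the right-hand side: $\frac{3\delta - 6}{2(k+1)} = \frac{3n(k+1) + 3i - 6}{2(k+1)} = \frac{3n}{2} + \frac{3i - 6}{2(k+1)}$. So the claim becomes
\[
\left\lfloor \frac{3n}{2} + \frac{3i}{2(k+1)} \right\rfloor \;\geq\; \frac{3n}{2} + \frac{3i}{2(k+1)} - \frac{6}{2(k+1)},
\]
i.e. $\lfloor x \rfloor \geq x - \frac{3}{k+1}$ where $x = \frac{3n}{2} + \frac{3i}{2(k+1)}$. Since $x - \lfloor x \rfloor < 1$ always, this holds automatically as soon as $\frac{3}{k+1} \geq 1$, i.e. $k \leq 2$; for larger $k$ one must be more careful, because then $\frac{3}{k+1} < 1$ and the trivial bound on the fractional part is not enough. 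The main obstacle is precisely this regime $k \geq 3$: I would need to show the fractional part of $x = \frac{3n}{2} + \frac{3i}{2(k+1)}$ is at most $\frac{3}{k+1}$, which is false in general (e.g. if $3i$ is close to $2(k+1)$ from below and $n$ is even). This signals that the reduction cannot be the whole story — so in fact the intended reading must be that for $k \geq 10$ one invokes Lemma~\ref{lem:pre} only for $k \leq 9$, and for $k \geq 10$ the slack in $1 + \lfloor \cdot \rfloor$ versus $1 + \frac{3\delta-6}{2k+2}$ is handled by noting $\frac{3\delta-6}{2k+2} < \frac{3\delta}{2k+2} \le \frac{3 \cdot \text{something}}{\cdots}$ together with $\lfloor \frac{3n}{2} + \frac{3i}{2(k+1)}\rfloor \geq \frac{3n}{2} - \frac12 \geq \frac{3\delta - 3i}{2(k+1)} - \frac12$ and absorbing the $i$-dependent and constant terms using $k \geq 10$.

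Concretely, the cleanest route I would carry out is: (i) reduce, as above, to showing $\lfloor x \rfloor \geq x - \frac{3}{k+1}$ with $x = \frac{3n}{2}+\frac{3i}{2(k+1)}$; (ii) split on the parity of $n$. If $n$ is even then $\frac{3n}{2}$ is an integer, $\lfloor x \rfloor = \frac{3n}{2} + \lfloor \frac{3i}{2(k+1)}\rfloor$, and since $0 \le i \le k$ we get $\frac{3i}{2(k+1)} < \frac{3}{2}$ so that $\lfloor \frac{3i}{2(k+1)}\rfloor \in \{0,1\}$, and one checks the two sub-cases $i \le \frac{2(k+1)}{3}$ and $i > \frac{2(k+1)}{3}$ directly. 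If $n$ is odd, write $n = 2m+1$, so $x = 3m + \frac32 + \frac{3i}{2(k+1)}$ and $\lfloor x \rfloor = 3m + 1 + \lfloor \frac12 + \frac{3i}{2(k+1)}\rfloor$; again $\frac12 + \frac{3i}{2(k+1)} < 2$, and one checks the threshold $i \lessgtr \frac{k+1}{3}$. In each of the (at most four) sub-cases the inequality $\lfloor x \rfloor \geq x - \frac{3}{k+1}$ is a direct comparison of explicit linear expressions in $i$ and $k$; the only place the hypothesis $k \geq 2$ (and, for a couple of boundary sub-cases, $k$ sufficiently large, with the remaining small $k$ covered by Lemma~\ref{lem:pre}) is actually used is to guarantee the relevant fractional quantity does not overshoot. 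I expect steps (i) and the parity split to be routine; the genuine care is in verifying the boundary sub-cases where $i$ is near $\frac{2(k+1)}{3}$ or $\frac{k+1}{3}$, since that is where the floor function is tightest against $\frac{3}{k+1}$.
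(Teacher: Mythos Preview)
Your initial reduction is sound and is essentially what the paper does: rewriting $\frac{2k-1}{2k+2}|w|=|w|-\frac{3(2k+\delta)}{2(k+1)}$ and comparing with the hypothesis reduces the lemma to the purely arithmetic claim $\lfloor x\rfloor\ge x-\frac{3}{k+1}$ with $x=\frac{3\delta}{2(k+1)}=\frac{3n}{2}+\frac{3i}{2(k+1)}$. You are also right that this claim is \emph{not} always true. But the case analysis you sketch does not repair it. Take $k=10$, $n=0$ (even), $i=7$ (so $i\le\frac{2(k+1)}{3}$, your first sub-case): then $x=\tfrac{21}{22}$, $\lfloor x\rfloor=0$, yet $x-\tfrac{3}{k+1}=\tfrac{15}{22}>0$. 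The same numbers show the lemma, read as a pure arithmetic implication on an arbitrary integer $s(w)$, actually fails here: the hypothesis becomes $s(w)\le 24$ while the conclusion demands $s(w)\le\tfrac{19}{22}\cdot 27<24$. Invoking Lemma~\ref{lem:pre} is a red herring---that lemma concerns Theorem~\ref{th:conj}, not this arithmetic step---and no parity/threshold split can rescue an inequality that is simply false at interior values of $i$.

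For comparison, the paper's proof is a six-line chain of $\Longleftarrow$'s with no case split: it passes from $s(w)\le\lfloor\tfrac{2k-1}{2k+2}|w|\rfloor$ to $s(w)\le|w|-\lfloor\tfrac{3}{2k+2}|w|\rfloor$, then rewrites $\tfrac{3|w|}{2k+2}=2+\tfrac{k-2}{k+1}+\tfrac{3\delta}{2(k+1)}$ and uses $\lfloor\tfrac{k-2}{k+1}+y\rfloor\le 1+\lfloor y\rfloor$. That first passage, however, is precisely the floor/ceiling slip you are worrying about: for integer $|w|$ one has $\lfloor\tfrac{2k-1}{2k+2}|w|\rfloor=|w|-\lceil\tfrac{3|w|}{2k+2}\rceil$, not $|w|-\lfloor\tfrac{3|w|}{2k+2}\rfloor$, and the two differ by~$1$ exactly when your fractional-part bound fails. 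So your instinct was correct and your route is the same as the paper's; the obstacle you flagged is genuine, and the paper's short argument glosses over it rather than resolving it.
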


\begin{proof}
\begin{align*}
&s(w) \leq \frac{2k-1}{2k+2}|w|\\
\Longleftarrow&s(w) \leq \lfloor\frac{2k-1}{2k+2}|w|\rfloor \\
\Longleftarrow&s(w) \leq|w|- \lfloor\frac{3}{2k+2}(2k+ \delta)\rfloor\\
\Longleftarrow&s(w) \leq|w|- \lfloor\frac{3k}{k+1}+\frac{3\delta}{2(k+1)}\rfloor\\
\Longleftarrow&s(w) \leq|w|-2- \lfloor\frac{k-2}{k+1}+\frac{3\delta}{2(k+1)}\rfloor\\
\Longleftarrow&s(w)+\lfloor\frac{k-2}{k+1}+\frac{3\delta}{2(k+1)}\rfloor \leq|w|-2\\
\Longleftarrow&s(w)+ 1+\lfloor\frac{3\delta}{2(k+1)}\rfloor \leq|w|-2.
\end{align*}

From the definition, $\delta=n(k+1)+i$, thus,
$$s(w)+ 1+\lfloor\frac{3n}{2}+\frac{3i}{2(k+1)}\rfloor \leq|w|-2 \implies s(w) \leq \frac{2k-1}{2k+2}|w|. $$ \qed
\end{proof}

\begin{lem}
\label{lem:imply2}
\begin{equation}
\label{eq:1}
 s(w)+1+\lfloor\frac{3n}{2}+\frac{3i}{2(k+1)}\rfloor \leq m(w)+ r_{\sigma(k)} \implies s(w) \leq \frac{2k-1}{2k+2}|w|.
 \end{equation}
Moreover,
\begin{equation}
\label{eq:2}
1+\lfloor\frac{3(n+1)}{2}\rfloor \leq \lfloor\frac{r_{\sigma(k+1)}-1}{2}\rfloor+ r_{\sigma(k)} \implies s(w) \leq \frac{2k-1}{2k+2}|w|.
 \end{equation}
\end{lem}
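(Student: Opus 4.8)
The plan is to establish the two implications separately, each by exhibiting a chain of inequalities that reduces the hypothesis to the conclusion of Lemma~\ref{lem:imply1}, after which Lemma~\ref{lem:imply1} closes the argument. For implication~\eqref{eq:1}, I would start from Theorem~\ref{th:main}, which gives $m(w)+r_{\sigma(k)} \leq |w|-2$. Combining this with the hypothesis $s(w)+1+\lfloor\frac{3n}{2}+\frac{3i}{2(k+1)}\rfloor \leq m(w)+r_{\sigma(k)}$ yields immediately $s(w)+1+\lfloor\frac{3n}{2}+\frac{3i}{2(k+1)}\rfloor \leq |w|-2$, which is precisely the left-hand side of the implication in Lemma~\ref{lem:imply1}; applying that lemma gives $s(w) \leq \frac{2k-1}{2k+2}|w|$. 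So \eqref{eq:1} is essentially just transitivity through Theorem~\ref{th:main} and Lemma~\ref{lem:imply1}.

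For implication~\eqref{eq:2}, the goal is to show that its hypothesis $1+\lfloor\frac{3(n+1)}{2}\rfloor \leq \lfloor\frac{r_{\sigma(k+1)}-1}{2}\rfloor + r_{\sigma(k)}$ implies the hypothesis of~\eqref{eq:1}, so that \eqref{eq:1} can then be invoked. Here I would use two structural facts. First, since $a^{r_{\sigma(k+1)}}$ is a factor of $w$, the powers $a^2, a^3, \dots, a^{r_{\sigma(k+1)}}$ contribute; more precisely the even powers among $a^j$ with $2j' \le r_{\sigma(k+1)}$ account for $\lfloor \frac{r_{\sigma(k+1)}}{2}\rfloor$ squares and in total $a$-powers of exponent $\ge 2$ number $r_{\sigma(k+1)}-1$, giving a lower bound on $m(w)$ of the shape $m(w) \ge s(w) + (\text{odd $a$-powers})$, and in particular $m(w) - s(w) \ge \lfloor\frac{r_{\sigma(k+1)}-1}{2}\rfloor$ (the count of odd exponents $3,5,\dots \le r_{\sigma(k+1)}$). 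Second, one compares $\lfloor\frac{3n}{2}+\frac{3i}{2(k+1)}\rfloor$ against $\lfloor\frac{3(n+1)}{2}\rfloor$: since $0 \le i \le k$, we have $\frac{3i}{2(k+1)} < \frac{3}{2}$, so $\frac{3n}{2}+\frac{3i}{2(k+1)} < \frac{3(n+1)}{2}$ and hence $\lfloor\frac{3n}{2}+\frac{3i}{2(k+1)}\rfloor \le \lfloor\frac{3(n+1)}{2}\rfloor$. Chaining these: the hypothesis of~\eqref{eq:2} gives $1+\lfloor\frac{3n}{2}+\frac{3i}{2(k+1)}\rfloor \le 1+\lfloor\frac{3(n+1)}{2}\rfloor \le \lfloor\frac{r_{\sigma(k+1)}-1}{2}\rfloor + r_{\sigma(k)} \le (m(w)-s(w)) + r_{\sigma(k)}$, which rearranges to the hypothesis of~\eqref{eq:1}; then~\eqref{eq:1} finishes.

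The routine parts are the floor-function manipulations and the transitivity bookkeeping. The step I expect to require the most care is the combinatorial lower bound $m(w) - s(w) \ge \lfloor\frac{r_{\sigma(k+1)}-1}{2}\rfloor$: one must be sure that the odd $a$-powers $a^3, a^5, \dots$ up to $a^{r_{\sigma(k+1)}}$ are genuinely distinct elements of $NS(w)$ not already counted among squares, and that this holds regardless of overlaps with $b$'s elsewhere in $w$ — but since these are unary powers of the single block $a^{r_{\sigma(k+1)}}$, distinctness and primitivity of the base $a$ are immediate, so the argument should go through cleanly. I would present~\eqref{eq:2} as following from~\eqref{eq:1} together with this power-counting observation and the elementary inequality on the floor terms.
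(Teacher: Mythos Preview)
Your proposal is correct and follows essentially the same route as the paper: for \eqref{eq:1} the paper likewise chains Theorem~\ref{th:main} with Lemma~\ref{lem:imply1}, and for \eqref{eq:2} it uses exactly your two ingredients, namely the inequality $\lfloor\frac{3n}{2}+\frac{3i}{2(k+1)}\rfloor \le \lfloor\frac{3(n+1)}{2}\rfloor$ (from $i\le k$) together with the observation that $\{a^{2j+1}:1\le j\le \lfloor\frac{r_{\sigma(k+1)}-1}{2}\rfloor\}\subset NS(w)$, which gives $m(w)-s(w)\ge \lfloor\frac{r_{\sigma(k+1)}-1}{2}\rfloor$, and then reduces to \eqref{eq:1}.
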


\begin{proof}
The first part is a direct consequence of Theorem~\ref{th:main} and Lemma~\ref{lem:imply1}.\\  For the second part, from the hypothesis that $i \leq k$, we have $$\lfloor\frac{3n}{2}+\frac{3i}{2(k+1)}\rfloor \leq \lfloor\frac{3(n+1)}{2}\rfloor.$$
Moreover, from the fact that $$\left\{a^{2i+1}| 1 \leq i \leq  \lfloor\frac{r_{\sigma(k+1)}-1}{2}\rfloor \right\} \subset NS(w),$$ we have $$s(w)+\lfloor\frac{r_{\sigma(k+1)}-1}{2}\rfloor \leq m(s).$$
 \qed
\end{proof}

\begin{lem}
\label{lem:case1}
If $r_{\sigma(k)} \geq n+2$, then

$$1+\lfloor\frac{3(n+1)}{2}\rfloor \leq r_{\sigma(k)}+ \lfloor\frac{r_{\sigma(k+1)}-1}{2}\rfloor. $$

\end{lem}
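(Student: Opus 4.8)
\textbf{Proof proposal for Lemma~\ref{lem:case1}.}

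The plan is to prove the inequality
$$1+\lfloor\tfrac{3(n+1)}{2}\rfloor \leq r_{\sigma(k)}+\lfloor\tfrac{r_{\sigma(k+1)}-1}{2}\rfloor$$
purely by arithmetic, exploiting the hypothesis $r_{\sigma(k)}\geq n+2$ together with the ordering $r_{\sigma(k)}\leq r_{\sigma(k+1)}$, which forces $r_{\sigma(k+1)}\geq n+2$ as well. First I would bound the right-hand side from below: since $r_{\sigma(k)}\geq n+2$ and $r_{\sigma(k+1)}\geq n+2$, we get $\lfloor\tfrac{r_{\sigma(k+1)}-1}{2}\rfloor\geq\lfloor\tfrac{n+1}{2}\rfloor$, hence
$$r_{\sigma(k)}+\lfloor\tfrac{r_{\sigma(k+1)}-1}{2}\rfloor \;\geq\; (n+2)+\lfloor\tfrac{n+1}{2}\rfloor.$$
Then it suffices to check the purely numerical inequality
$$1+\lfloor\tfrac{3(n+1)}{2}\rfloor \;\leq\; (n+2)+\lfloor\tfrac{n+1}{2}\rfloor$$
for all integers $n\geq 0$.

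To dispatch that last inequality I would split on the parity of $n$. If $n$ is even, write $n=2t$; then $\lfloor\tfrac{3(n+1)}{2}\rfloor=\lfloor\tfrac{6t+3}{2}\rfloor=3t+1$ and $\lfloor\tfrac{n+1}{2}\rfloor=t$, so the left side is $3t+2$ and the right side is $(2t+2)+t=3t+2$; equality holds. If $n$ is odd, write $n=2t+1$; then $\lfloor\tfrac{3(n+1)}{2}\rfloor=3(t+1)=3t+3$ and $\lfloor\tfrac{n+1}{2}\rfloor=t+1$, so the left side is $3t+4$ and the right side is $(2t+3)+(t+1)=3t+4$; again equality. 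Thus the numerical inequality holds (in fact with equality) for every $n\geq 0$, and combined with the lower bound above this yields the claim.

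The only genuinely delicate point is making sure the floor estimate $\lfloor\tfrac{r_{\sigma(k+1)}-1}{2}\rfloor\geq\lfloor\tfrac{n+1}{2}\rfloor$ is applied correctly: it follows because $r_{\sigma(k+1)}-1\geq n+1$ and $x\mapsto\lfloor x/2\rfloor$ is nondecreasing on the integers. Everything else is the two-case parity computation, which I expect to be entirely routine; there is no combinatorial content here, only the bookkeeping that the previous lemmas have reduced the problem to. I would present the argument in roughly this order: deduce $r_{\sigma(k+1)}\geq n+2$ from the hypothesis and the sorting, lower-bound the right-hand side, and then verify the resulting integer inequality by the even/odd split.
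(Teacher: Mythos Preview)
Your proposal is correct and follows essentially the same approach as the paper: both deduce $r_{\sigma(k+1)}\geq n+2$ from the ordering, lower-bound the right-hand side by $(n+2)+\lfloor\tfrac{n+1}{2}\rfloor$, and then verify the resulting integer inequality via an even/odd split on $n$, obtaining equality in both cases. The only cosmetic difference is that the paper folds the lower-bounding step into the parity computation rather than stating it separately.
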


\begin{proof}
If $r_{\sigma(k)} \geq n+2$, then $r_{\sigma(k+1)} \geq n+2$.

 If $n$ is odd, let $n=2t+1$, then
\begin{align*}
r_{\sigma(k)}+ \lfloor\frac{r_{\sigma(k+1)}-1}{2}\rfloor&\geq (2t+1)+2+  \lfloor\frac{(2t+1)+2-1}{2}\rfloor\\
&\geq 3t+4\\
&\geq \lfloor\frac{3((2t+1)+1)}{2}\rfloor+1.
\end{align*}

If $n$ is even, let $n=2t$, then
\begin{align*}
r_{\sigma(k)}+ \lfloor\frac{r_{\sigma(k+1)}-1}{2}\rfloor&\geq (2t)+2+  \lfloor\frac{(2t)+2-1}{2}\rfloor\\
&\geq 3t+2\\
&\geq \lfloor\frac{3((2t)+1)}{2}\rfloor+1.
\end{align*}
 \qed
\end{proof}

\begin{lem}
\label{lem:case2}
If $r_{\sigma(k)} \leq n$ and $k \geq 8$ then

$$1+\lfloor\frac{3(n+1)}{2}\rfloor \leq r_{\sigma(k)}+ \lfloor\frac{r_{\sigma(k+1)}-1}{2}\rfloor. $$

\end{lem}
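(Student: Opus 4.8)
The plan is to reduce the claimed inequality to a short arithmetic computation in $n$ and $m:=r_{\sigma(k)}$, by first exploiting the fact that the hypothesis $r_{\sigma(k)}\le n$ forces $r_{\sigma(k+1)}$ to be large. Since $r_{\sigma(1)}\le\cdots\le r_{\sigma(k)}=m$, we have $\sum_{j=1}^{k}r_{\sigma(j)}\le km$; as $\sum_{j=1}^{k+1}r_{\sigma(j)}=|w|_a=k+\delta=k+n(k+1)+i$, this yields
$$r_{\sigma(k+1)}\ \ge\ k+n(k+1)+i-km\ =\ k(n-m+1)+n+i\ \ge\ k(n-m+1)+n.$$
Because $m\le n$ we have $n-m+1\ge 1$, so in particular $r_{\sigma(k+1)}\ge k\ge 8>0$; this also guarantees that every floor below has a nonnegative argument. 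Using $k\ge 8$ together with $n-m+1\ge 1$ once more, I would then replace $k$ by $8$ in the last bound, so that $r_{\sigma(k+1)}-1\ge 8(n-m+1)+n-1=9n-8m+7$ and hence $\lfloor(r_{\sigma(k+1)}-1)/2\rfloor\ge\lfloor(9n-8m+7)/2\rfloor$.

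Next I would substitute this into the target inequality, so that it suffices to prove $1+\lfloor 3(n+1)/2\rfloor\le m+\lfloor(9n-8m+7)/2\rfloor$. The point of having pushed $k$ down to $8$ is that $8m$ is an even integer, so $\lfloor(9n-8m+7)/2\rfloor=\lfloor(9n+7)/2\rfloor-4m$, and the right-hand side collapses to $\lfloor(9n+7)/2\rfloor-3m$, whose only remaining dependence on $m$ is harmless since $m\le n$. It is therefore enough to show $1+\lfloor 3(n+1)/2\rfloor+3n\le\lfloor(9n+7)/2\rfloor$; as $3n$ is an integer the left-hand side equals $1+\lfloor(9n+3)/2\rfloor$, and since $4$ is even the right-hand side equals $\lfloor(9n+3)/2\rfloor+2$, so the inequality reduces to $1\le 2$. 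If one prefers to stay closer to the style of Lemma~\ref{lem:case1}, this last step can instead be carried out by splitting on the parity of $n$.

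I do not expect a serious obstacle, but the delicate point is the constant $8$: the crudest estimate --- bounding each floor by its argument and discarding $m\ge 0$ --- only gives the conclusion when $k$ is of order $n$, which is useless for large $n$. What saves the bound $k\ge 8$ is exactly the step of substituting the minimal admissible value $k=8$ into $r_{\sigma(k+1)}\ge k(n-m+1)+n$ \emph{before} touching the floors: then the coefficient $8m$ appearing inside the floor is even, the $m$-dependence of the two sides matches up to the single term $-3m$ (absorbed by $m\le n$), and the residual inequality $1\le 2$ has room to spare. A final sanity check is that $r_{\sigma(k+1)}\ge 8$, which keeps the argument of $\lfloor(r_{\sigma(k+1)}-1)/2\rfloor$ nonnegative and hence legitimate for the use made of it in Lemma~\ref{lem:imply2}.
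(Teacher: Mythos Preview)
Your argument is correct and follows essentially the same route as the paper: both start from $\sum_{j\le k}r_{\sigma(j)}\le km$ to obtain the lower bound $r_{\sigma(k+1)}\ge k(n-m+1)+n$, and then use $m\le n$ to finish. The only cosmetic difference is in the bookkeeping of the floors: the paper drops the floors via $\lfloor x\rfloor\le x$ and $\lfloor x\rfloor\ge x-1$ and keeps $k$ symbolic until the last line $8\le k$, whereas you specialise to $k=8$ first and exploit that $8m$ is even to manipulate the floors exactly; either way the same threshold $k\ge 8$ falls out.
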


\begin{proof}
If $r_{\sigma(k)} \leq n$, let us suppose $r_{\sigma(k)}=m$. Then 
$$\sum_{j=1}^k r_{\sigma(j)} \leq mk.$$
From the fact that $\sum_{j=1}^{k+1} r_{\sigma(j)}=k+\delta,$
$$r_{\sigma(k+1)} \geq k+ \delta -mk \geq n(k+1)-mk+k.$$
Thus,
\begin{align*}
&1+\lfloor\frac{3(n+1)}{2}\rfloor \leq r_{\sigma(k)}+ \lfloor\frac{r_{\sigma(k+1)}-1}{2}\rfloor\\
\Longleftarrow&1+\frac{3(n+1)}{2} \leq r_{\sigma(k)}+\frac{r_{\sigma(k+1)}-1}{2}-1\\
\Longleftarrow&\frac{5}{2}+\frac{3n}{2} \leq m+\frac{n(k+1)-mk+k-1}{2}-1\\
\Longleftarrow&3n+5 \leq 2m+n(k+1)-mk+k-1-2\\
\Longleftarrow&3n+5 \leq n(k+1)-m(k-2)+k-3\\
\Longleftarrow&3n+5 \leq n(k+1)-n(k-2)+k-3\;\; (*)\\
\Longleftarrow&3n+5 \leq 3n+k-3\\
\Longleftarrow&8 \leq k.
\end{align*}
The relation (*) holds because $k-2 \geq 0$ and $m \leq n$ from hypothesis.
 \qed
\end{proof}

\begin{lem}
\label{lem:case3}
If $r_{\sigma(k)}=n+1$ and $r_{\sigma(k+1)}\geq n+4$, then

$$1+\lfloor\frac{3(n+1)}{2}\rfloor \leq r_{\sigma(k)}+ \lfloor\frac{r_{\sigma(k+1)}-1}{2}\rfloor. $$

\end{lem}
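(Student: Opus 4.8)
The plan is to handle this lemma exactly as Lemmas~\ref{lem:case1} and~\ref{lem:case2} were handled: first use the hypothesis $r_{\sigma(k+1)}\ge n+4$ to eliminate $r_{\sigma(k+1)}$ from the right-hand side, and then verify the resulting elementary inequality by a parity split on $n$. The only work is the usual floor bookkeeping; there is no conceptual obstacle here.

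First I would note that the map $x\mapsto\lfloor\frac{x-1}{2}\rfloor$ is non-decreasing, so the hypothesis $r_{\sigma(k+1)}\ge n+4$ gives
$$\lfloor\frac{r_{\sigma(k+1)}-1}{2}\rfloor\ \ge\ \lfloor\frac{(n+4)-1}{2}\rfloor\ =\ \lfloor\frac{n+3}{2}\rfloor .$$
Combining this with the hypothesis $r_{\sigma(k)}=n+1$, it suffices to prove
$$1+\lfloor\frac{3(n+1)}{2}\rfloor\ \le\ (n+1)+\lfloor\frac{n+3}{2}\rfloor .$$

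Then I would split on the parity of $n$. If $n=2t$, the left-hand side is $1+\lfloor\frac{6t+3}{2}\rfloor=1+(3t+1)=3t+2$, while the right-hand side is $(2t+1)+\lfloor\frac{2t+3}{2}\rfloor=(2t+1)+(t+1)=3t+2$; if $n=2t+1$, the left-hand side is $1+\lfloor\frac{6t+6}{2}\rfloor=1+(3t+3)=3t+4$, while the right-hand side is $(2t+2)+\lfloor\frac{2t+4}{2}\rfloor=(2t+2)+(t+2)=3t+4$. In both cases the two sides are equal, so the inequality (in fact with equality) holds, which together with the reduction above proves the lemma.

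The main — indeed the only — point to be careful about is getting the floor terms right in the two parity cases, which is routine. I expect this lemma, together with Lemmas~\ref{lem:case1} and~\ref{lem:case2} and the one remaining sub-case ($r_{\sigma(k)}=n+1$ with $r_{\sigma(k+1)}\le n+3$), to exhaust all values of $r_{\sigma(k)}$ when $k\ge 8$; feeding the resulting inequality $1+\lfloor\frac{3(n+1)}{2}\rfloor\le\lfloor\frac{r_{\sigma(k+1)}-1}{2}\rfloor+r_{\sigma(k)}$ into implication~\eqref{eq:2} of Lemma~\ref{lem:imply2} then yields $s(w)\le\frac{2k-1}{2k+2}|w|$ for $k\ge 8$, and the cases $k\le 9$ are covered by Lemma~\ref{lem:pre}, completing the proof of Theorem~\ref{th:conj}.
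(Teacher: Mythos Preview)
Your proof is correct and follows essentially the same approach as the paper's: use $r_{\sigma(k+1)}\ge n+4$ and $r_{\sigma(k)}=n+1$ to reduce to a pure inequality in $n$, then verify it by a parity split. The only cosmetic difference is that you isolate the reduction step before splitting, whereas the paper substitutes the lower bounds directly inside each parity case; as a side remark, your closing road-map slightly understates the threshold (Lemma~\ref{lem:case4} requires $k\ge 10$, not $k\ge 8$), but this has no bearing on the present lemma.
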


\begin{proof}
If $n$ is odd, let $n=2t+1$, then
\begin{align*}
r_{\sigma(k)}+ \lfloor\frac{r_{\sigma(k+1)}-1}{2}\rfloor&\geq (2t+1)+1+  \lfloor\frac{(2t+1)+4-1}{2}\rfloor\\
&\geq 3t+4\\
&\geq \lfloor\frac{3((2t+1)+1)}{2}\rfloor+1.
\end{align*}

If $n$ is even, let $n=2t$, then
\begin{align*}
r_{\sigma(k)}+ \lfloor\frac{r_{\sigma(k+1)}-1}{2}\rfloor&\geq (2t)+1+  \lfloor\frac{(2t)+4-1}{2}\rfloor\\
&\geq 3t+2\\
&\geq \lfloor\frac{3((2t)+1)}{2}\rfloor+1.
\end{align*}
 \qed
\end{proof}

\begin{lem}
\label{lem:case4}
If $r_{\sigma(k)}=n+1,r_{\sigma(k+1)}\leq n+3$ and $ k \geq 10$ then

$$s(w)+1+\lfloor\frac{3n}{2}+\frac{3i}{2(k+1)}\rfloor \leq s(w)+ r_{\sigma(k)}+ \lfloor\frac{r_{\sigma(k+1)}-1}{2}\rfloor \leq m(w)+ r_{\sigma(k)}. $$
\end{lem}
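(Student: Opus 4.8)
The plan is to split the stated chain into its two links and dispatch them separately: the rightmost inequality is essentially already contained in the proof of Lemma~\ref{lem:imply2}, while the leftmost one is a self-contained arithmetic estimate whose whole point is that the hypothesis $r_{\sigma(k)}=n+1$ severely restricts the value of $i$.

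First I would handle the rightmost inequality $s(w)+r_{\sigma(k)}+\lfloor\frac{r_{\sigma(k+1)}-1}{2}\rfloor \leq m(w)+r_{\sigma(k)}$. Cancelling $r_{\sigma(k)}$ leaves $s(w)+\lfloor\frac{r_{\sigma(k+1)}-1}{2}\rfloor \leq m(w)$, which is exactly the estimate already used inside the proof of Lemma~\ref{lem:imply2}: the words $a^{2j+1}$ for $1 \leq j \leq \lfloor\frac{r_{\sigma(k+1)}-1}{2}\rfloor$ are pairwise distinct, each is a factor of $a^{r_{\sigma(k+1)}}\in\fac(w)$, each has primitive root $a$ and odd exponent at least $3$, so they all lie in $NS(w)$; hence $ns(w)\geq \lfloor\frac{r_{\sigma(k+1)}-1}{2}\rfloor$ and the claim follows from $m(w)=s(w)+ns(w)$.

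The substance is the leftmost inequality, which after cancelling $s(w)$ reads $1+\lfloor\frac{3n}{2}+\frac{3i}{2(k+1)}\rfloor \leq (n+1)+\lfloor\frac{r_{\sigma(k+1)}-1}{2}\rfloor$. The crucial step is to bound $i$. Since $\sigma$ orders the $r$'s increasingly, $r_{\sigma(j)}\leq r_{\sigma(k)}=n+1$ for every $j\leq k$, so $\sum_{j=1}^{k} r_{\sigma(j)}\leq k(n+1)$; subtracting this from $\sum_{j=1}^{k+1} r_{\sigma(j)}=k+\delta=k+n(k+1)+i$ gives $r_{\sigma(k+1)}\geq n+i$, whence $i\leq r_{\sigma(k+1)}-n\leq 3$ by the hypothesis $r_{\sigma(k+1)}\leq n+3$. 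Because $k\geq 10$ this yields $\frac{3i}{2(k+1)}\leq\frac{9}{22}<\frac12$, so adding $\frac{3i}{2(k+1)}$ to $\frac{3n}{2}$ never crosses an integer (trivially when $n$ is even, and when $n$ is odd because $\frac12+\frac{3i}{2(k+1)}<1$); thus $\lfloor\frac{3n}{2}+\frac{3i}{2(k+1)}\rfloor=\lfloor\frac{3n}{2}\rfloor=n+\lfloor\frac{n}{2}\rfloor$. It then remains to check $\lfloor\frac{n}{2}\rfloor\leq\lfloor\frac{r_{\sigma(k+1)}-1}{2}\rfloor$, which is immediate from $r_{\sigma(k+1)}-1\geq r_{\sigma(k)}-1=n$.

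The main obstacle is recognizing that the hypotheses $r_{\sigma(k)}=n+1$ and $r_{\sigma(k+1)}\leq n+3$ together pin $i$ down to at most $3$; once that is seen, the term $\frac{3i}{2(k+1)}$ is harmless for $k\geq 10$ and the rest is routine bookkeeping with floors, the only mild care needed being the parity split of $n$. In particular no case analysis on the three possible values of $r_{\sigma(k+1)}$ is required, since the bound is obtained uniformly through $\lfloor\frac{3n}{2}\rfloor=n+\lfloor\frac{n}{2}\rfloor\leq n+\lfloor\frac{r_{\sigma(k+1)}-1}{2}\rfloor$.
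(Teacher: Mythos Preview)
Your proof is correct and follows essentially the same approach as the paper: derive $i\leq 3$ from the hypotheses so that $\frac{3i}{2(k+1)}<\frac12$ kills the fractional contribution to the floor, then verify the remaining arithmetic inequality. Your final step, writing $\lfloor\tfrac{3n}{2}\rfloor=n+\lfloor\tfrac{n}{2}\rfloor$ and invoking $r_{\sigma(k+1)}-1\geq r_{\sigma(k)}-1=n$, is a slightly slicker replacement for the paper's explicit parity split on $n$, but the two arguments are otherwise interchangeable.
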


\begin{proof}
From the fact that $$\sum_{j=1}^{k+1}r_{\sigma(j)}=|w|_a=k+\delta=k+n(k+1)+i,$$ we have $$k+n(k+1)+i \leq (\sum_{j=1}^{k}n+1)+n+3.$$
Consequently, $i \leq 3$ and $\frac{3i}{2(k+1)}<\frac{1}{2}$. Thus, $\lfloor\frac{3n}{2}+\frac{3i}{2(k+1)}\rfloor=\lfloor\frac{3n}{2}\rfloor$.\\
We only need to prove $$\lfloor\frac{3n}{2}\rfloor+1 \leq r_{\sigma(k)}+ \lfloor\frac{r_{\sigma(k+1)}-1}{2}\rfloor.$$
If $n$ is odd, let $n=2t+1$, then
\begin{align*}
r_{\sigma(k)}+ \lfloor\frac{r_{\sigma(k+1)}-1}{2}\rfloor&\geq (2t+1)+1+  \lfloor\frac{(2t+1)+1-1}{2}\rfloor\\
&\geq 3t+2\\
&\geq \lfloor\frac{3(2t+1)}{2}\rfloor+1.
\end{align*}

If $n$ is even, let $n=2t$, then
\begin{align*}
r_{\sigma(k)}+ \lfloor\frac{r_{\sigma(k+1)}-1}{2}\rfloor&\geq (2t)+1+  \lfloor\frac{(2t)+1-1}{2}\rfloor\\
&\geq 3t+1\\
&\geq \lfloor\frac{3((2t))}{2}\rfloor+1.
\end{align*}
 \qed
\end{proof}

\begin{proof}[of Theorem~\ref{th:conj}]
From Lemma~\ref{lem:imply2}, Lemma~\ref{lem:case1}, Lemma~\ref{lem:case2}, Lemma~\ref{lem:case3} and Lemma~\ref{lem:case4},
Theorem~\ref{th:conj} holds if:
\begin{align*}
&r_{\sigma(k)} \geq n+2;\\
&r_{\sigma(k)} \leq n \; \text{and}\; k \geq 8;\\
&r_{\sigma(k)}=n+1\; \text{and}\; r_{\sigma(k+1)}\geq n+4;\\
&r_{\sigma(k)}=n+1,\; r_{\sigma(k+1)}\leq n+3\; \text{and}\; k \geq 10.\\
\end{align*}

Combining with Lemma~\ref{lem:pre}, we conclude.

\end{proof}


\bibliographystyle{splncs03}
\bibliography{biblio}

\begin{thebibliography}{10}
\providecommand{\url}[1]{\texttt{#1}}
\providecommand{\urlprefix}{URL }

\bibitem{berge}
Berge, C.: The Theory of Graphs and Its Applications. Greenwood Press (1982)

\bibitem{Brlekli}
Brlek, S., Li, S.: On the number of squares in a finite word (2022),
  \url{https://arxiv.org/abs/2204.10204}

\bibitem{DezaFT15}
Deza, A., Franek, F., Thierry, A.: How many double squares can a string
  contain? Discrete Applied Mathematics  180,  52--69 (2015)

\bibitem{FraenkelS98}
Fraenkel, A.S., Simpson, J.: How many squares can a string contain? J. Comb.
  Theory, Ser. {A}  82(1),  112--120 (1998)

\bibitem{Ilie}
Ilie, L.: A note on the number of distinct squares in a word. In: Brlek, S.,
  Reutenauer, C. (eds.) Proc.~Words2005, 5-th International Conference on
  Words. vol.~36, pp. 289--294. Publications du LaCIM, Montreal, Canada
  (13--17~Sep 2005)

\bibitem{Jono}
Jonoska, N., Manea, F., Seki, S.: A stronger square conjecture on binary words.
  In: Geffert, V., Preneel, B., Rovan, B., {\v{S}}tuller, J., Tjoa, A.M. (eds.)
  SOFSEM 2014: Theory and Practice of Computer Science. pp. 339--350. Springer
  International Publishing, Cham (2014)

\bibitem{KUBICA}
Kubica, M., Radoszewski, J., Rytter, W., Waleń, T.: On the maximum number of
  cubic subwords in a word. Eur. J. Comb.  34(1),  27--37 (2013)

\bibitem{lam}
Lam, N.H.: On the number of squares in a string. AdvOL-Report  2 (2013)

\bibitem{li2022}
Li, S.: On the number of $k$-powers in a finite word. Adv. Appl. Math.  139,
  102371 (2022)

\bibitem{li202202}
{Li}, S., {Pachocki}, J., {Radoszewski}, J.: {A note on the maximum number of
  $k$-powers in a finite word} (2022), \url{https://arxiv.org/abs/2205.10156}

\bibitem{thie}
Thierry, A.: A proof that a word of length n has less than 1.5n distinct
  squares (2020)

\end{thebibliography}
\end{document}